\documentclass[a4paper,10pt]{article}

\usepackage{amsmath,amssymb,amsthm,amscd}
\usepackage[mathscr]{eucal}
\usepackage[all]{xy}
\usepackage{graphicx}

\setlength{\oddsidemargin}{5mm}  
\setlength{\evensidemargin}{5mm} 
\setlength{\topmargin}{0mm}      
\setlength{\topskip}{5mm}       
\setlength{\headsep}{7mm}      
\setlength{\headheight}{10mm}    
\setlength{\textwidth}{150mm}    
\setlength{\textheight}{225mm}  

\makeatletter
  
  \@addtoreset{equation}{section}
\makeatother

\newcommand{\mbf}{\mathbf}

\newcommand{\mfrak}{\mathfrak}
\newcommand{\mbb}{\mathbb}
\newcommand{\mrm}{\mathrm}

\newcommand{\vphi}{\varphi}

\newcommand{\aet}{\mathrm{\acute{e}t}}
\newcommand{\cO}{\mathcal{O}}

\newtheorem{theorem}{Theorem}[section]

\newtheorem{lemma}[theorem]{Lemma}
\newtheorem{proposition}[theorem]{Proposition}

\theoremstyle{definition}
\newtheorem{definition}[theorem]{Definition}
\newtheorem{remark}[theorem]{Remark}

\newtheorem*{question}{Question}

\newtheorem*{acknowledgments}{Acknowledgments}

\title{Torsion of abelian varieties and Lubin-Tate extensions}

\author{Yoshiyasu Ozeki\footnote{
Department of Mathematics and Physics, Faculty of Science, Kanagawa University,
  2946 Tsuchiya, Hiratsuka-shi, Kanagawa 259--1293, JAPAN
\endgraf
e-mail: {\tt ozeki@kanagawa-u.ac.jp}}
}

\begin{document}
\maketitle

\begin{abstract}
We show that, for an abelian variety defined over a $p$-adic field $K$
which has potential good reduction,
its torsion subgroup with values in the composite field of $K$ and 
a certain Lubin-Tate extension over a $p$-adic field is finite. 
\end{abstract}


\section{Introduction}

Let $p$ be a prime number and $A$ an abelian variety over a  $p$-adic field $K$
(here, a {\it $p$-adic field} is a finite extension of $\mbb{Q}_p$). 
For an algebraic extension $L/K$, we denote by $A(L)$ the group of 
$L$-rational points of $A$ and also denote by $A(L)_{\mrm{tor}}$ its torsion subgroup.
We are interested in determining whether $A(L)_{\mrm{tor}}$ is finite or not.
The most basic result is given by Mattuck \cite{Ma}; 
$A(L)_{\mrm{tor}}$ is finite if  $L$ is a finite extension of $K$.
Thus our main interest is the case where $L$ is an infinite algebraic extension of $K$.
For this, Imai's result \cite{Im} is well-known.
He showed that $A(K(\mu_{p^{\infty}}))_{\mrm{tor}}$ is finite if  
$A$ has potential good reduction,
where $\mu_{p^{\infty}}$ denotes the group of $p$-power roots of unity
in a fixed separable closure $\overline{K}$ of $K$.
Since the field $K(\mu_{p^{\infty}})$ is the composite field of $K$ and the Lubin-Tate extension 
over $\mbb{Q}_p$ associated with a uniformizer $p$ of $\mbb{Q}_p$, 
we naturally have the following question. 

\begin{question}
Let $A$ be an abelian variety over a $p$-adic field $K$.
Let $k_{\pi}$ be  the Lubin-Tate extension associated with a uniformizer $\pi$ of a $p$-adic field $k$.
Then, is $A(Kk_{\pi})_{\mrm{tor}}$ finite?
\end{question}

\noindent
In the case of Imai's theorem ($k=\mbb{Q}_p$ and $\pi=p$), 
the answer of the question is affirmative for good reduction cases.
However, the question sometimes has a negative answer.
For example, if $A$ is a Tate curve over $K$, $k=\mbb{Q}_p$ and $\pi=p$,
then $A(Kk_{\pi})[p^{\infty}]=A(K(\mu_{p^{\infty}}))[p^{\infty}]$ is clearly infinite.
We also have an example even  for good reduction cases as given  in Remark \ref{normcondi}.

The aim of this paper is to give a sufficient condition 
on $k$ and $\pi$ so that the question has an affirmative answer
for good reduction cases.
Let $k, \pi$ and $k_{\pi}$ be as above.
Let $q$ be the order of the residue field of $k$. 
We denote by $k_G$ the Galois closure of $k/\mbb{Q}_p$.  
We put $d_G=[k_G:\mbb{Q}_p]$ and denote by $e_G$ 
the ramification index of the extension $k_G/k$.
Our main result is as follows
(see  Definitions \ref{Weil1} and \ref{Weil2} for some undefined  notion).

\begin{theorem}
\label{MC}
Let $A$ be an abelian variety over a $p$-adic field $K$ with potential good reduction.
If $\mrm{Nr}_{k/\mbb{Q}_p}(\pi)$
is not a $q$-Weil integer of weight 
$sd_G/t$ for some integers $1\le s\le e_G$ and $1\le t \le sd_G$, 
then $A(Kk_{\pi})_{\mrm{tor}}$ is finite.
\end{theorem}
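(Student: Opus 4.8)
The plan is to reduce the finiteness of $A(Kk_\pi)_{\mrm{tor}}$ to a statement about the action of inertia and Frobenius on the Tate module, exploiting the structure of the Lubin-Tate extension $k_\pi$. Since $A$ has potential good reduction, after replacing $K$ by a finite extension (which does not affect finiteness of the torsion, by Mattuck's theorem applied to the difference) we may assume $A$ has good reduction over $K$, and moreover that $K$ contains enough of the relevant fields, in particular the Galois closure $k_G$ of $k/\mbb{Q}_p$. The $p$-primary part $A(Kk_\pi)[p^\infty]$ and the prime-to-$p$ part must be handled separately: the prime-to-$p$ part is controlled by the fact that $Kk_\pi/K$ is a pro-$p$-by-finite extension (the Lubin-Tate extension $k_\pi$ has Galois group $\mfO_k^\times$ over $k$, whose wild part is pro-$p$), so only finitely many primes $\ell \neq p$ can contribute and each contributes finitely by a Weil-number argument on Frobenius eigenvalues; hence the crux is the $p$-primary part.

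For the $p$-primary part, I would pass to the Tate module $T_p A$ with its continuous $G_K$-action, which by good reduction is crystalline with Hodge-Tate weights $0$ and $1$. A torsion point of order $p^n$ rational over $Kk_\pi$ corresponds to a vector in $A[p^n]$ fixed by $G_{Kk_\pi}$; so infinitude of $A(Kk_\pi)[p^\infty]$ would force the existence of a nonzero $G_{Kk_\pi}$-stable line (after a limit argument, a $\mbb{Z}_p$-direct-summand sub-representation) in $T_pA$ on which $G_{Kk_\pi}$ acts trivially, equivalently a quotient of $T_pA$ on which the image of $G_{Kk_\pi}$ is trivial. The key point is to understand $G_{Kk_\pi}$ inside $G_K$: the Lubin-Tate character $\chi_\pi\colon G_k \to \mfO_k^\times$ cuts out $k_\pi$, and by local class field theory $\chi_\pi$ is characterized by sending a Frobenius (associated to $\pi$) to a unit and the inertia subgroup $I_k$ onto $\mfO_k^\times$ via the reciprocity map. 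So $G_{Kk_\pi}$ is, up to finite index, the kernel of $\chi_\pi$ restricted to $G_K$, and the condition that $A[p^\infty](Kk_\pi)$ be infinite translates into: some Jordan-Hölder constituent of $T_pA \otimes \mbb{Q}_p$ becomes trivial on $\ker(\chi_\pi|_{G_K})$, i.e. is a subquotient built from powers of $\chi_\pi$ and finite-order characters.

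The heart of the argument is then a Hodge-Tate/crystalline weight computation combined with the Weil-number constraint on Frobenius. If a one-dimensional (over some finite extension of $\mbb{Q}_p$) subquotient $\rho$ of $T_pA\otimes\mbb{Q}_p$ factors through $\langle \chi_\pi\rangle$ up to finite order, then on one hand its Hodge-Tate weight is $0$ or $1$ (being a subquotient of the Tate module of a good-reduction abelian variety), and the Hodge-Tate weights of $\chi_\pi$ and its Galois conjugates over $\mbb{Q}_p$ are determined by the embeddings $k_G \hookrightarrow \overline{\mbb{Q}_p}$, which forces a relation of the shape "weight $= sd_G/t$" with $1\le s\le e_G$ (the number of conjugate Lubin-Tate characters contributing, bounded by the ramification $e_G$ of $k_G/k$) and $1\le t\le sd_G$; on the other hand, by Weil's theorem (applied to the reduction of $A$, via the Eichler-Shimura relation identifying geometric Frobenius eigenvalues with $q'$-Weil numbers for the residue field of $K$), the value of $\rho$ on a Frobenius is an algebraic number all of whose conjugates have a fixed absolute value, and tracing through the reciprocity law this value is exactly $\mrm{Nr}_{k/\mbb{Q}_p}(\pi)$ up to units and roots of unity. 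Putting these together, infinitude of the torsion forces $\mrm{Nr}_{k/\mbb{Q}_p}(\pi)$ to be a $q$-Weil integer of weight $sd_G/t$ for suitable $s,t$ in the stated ranges — the contrapositive of the theorem. The main obstacle I expect is the bookkeeping in the second half: correctly matching up the normalization of the Lubin-Tate character with the geometric Frobenius, keeping track of all the Galois conjugates of $\chi_\pi$ and their individual Hodge-Tate weights (this is where $d_G$, $e_G$ and the ranges of $s,t$ come from), and verifying that a subquotient of $T_pA$ that is potentially abelian of this form genuinely yields a growing family of rational torsion points rather than merely a growing family of fixed lines modulo $p^n$ — i.e., controlling the difference between $H^0(G_{Kk_\pi}, A[p^n])$ and $\bigl(T_pA\bigr)^{G_{Kk_\pi}}\!/p^n$ uniformly in $n$, which requires a boundedness statement of Imai-Serre type for the cohomology.
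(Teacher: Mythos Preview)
Your overall strategy matches the paper's: split into prime-to-$p$ and $p$-primary torsion, handle the former via a Frobenius-weight argument (the paper uses simply that $Kk_\pi$ has finite residue field, so Frobenius acts on $V_\ell(A)$ with Weil-number eigenvalues $\neq 1$; the pro-$p$ structure you emphasize is not what drives it), and for the $p$-part show that a nonzero $G_{Kk_\pi}$-trivial subquotient of $V_p(A)$ forces $\mathrm{Nr}_{k/\mbb{Q}_p}(\pi)$ to be a Weil integer of the stated weight via crystalline/Hodge--Tate theory.

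However, your account of where the parameters $s$ and $t$ come from is not right, and the two technical steps that produce them are missing from your sketch. The integer $s$ is \emph{not} ``the number of conjugate Lubin--Tate characters contributing''. In the paper one first reduces to the case $k/\mbb{Q}_p$ Galois by replacing $(k,\pi)$ with $(k',\pi')$ where $k'/k_G$ is an unramified extension of some degree $s\le e_G$ and $\pi'$ is a uniformizer with $\mrm{Nr}_{k'/k}(\pi')=\pi^{f_{k'/k}}$ (this is where $e_G$ enters, via a local norm result). Once $k/\mbb{Q}_p$ is Galois, Serre's theorem on locally algebraic characters shows each Jordan--H\"older factor of the fixed subspace has the form $E'(\prod_{\sigma\in\Gamma_k}\sigma^{-1}\!\circ\chi_\pi^{r_\sigma})$ with each $r_\sigma$ a Hodge--Tate weight, and an explicit computation of $D^k_{\mrm{cris}}(k(\chi_\pi))$ (its $\vphi^{f_k}$ has eigenvalues the conjugates of $\pi^{-1}$) identifies the crystalline Frobenius eigenvalue $\alpha$ of such a factor as a product $\prod_{\tau\in\Gamma_k}\tau(\pi)^{-n_\tau}$ to the $f_{K'/k}$-th power. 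Taking the product over $\Gamma_k$ collapses this to $\mrm{Nr}_{k/\mbb{Q}_p}(\pi)^R$, and comparing Weil weights gives $h=-R/d'\in(1/sd_G)\mbb{Z}$; this is where $t$ comes from. Your phrase ``the value of $\rho$ on a Frobenius is $\mrm{Nr}_{k/\mbb{Q}_p}(\pi)$ up to units and roots of unity'' is not what happens---it is only after taking the product over all embeddings that the norm appears.

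Finally, your closing worry is unnecessary: infinitude of $A(Kk_\pi)[p^\infty]$ is directly equivalent to $V_p(A)^{G_{Kk_\pi}}\neq 0$, since an infinite subgroup of $(\mbb{Q}_p/\mbb{Z}_p)^{2g}$ contains a copy of $\mbb{Q}_p/\mbb{Z}_p$, hence a compatible system in $T_p(A)^{G_{Kk_\pi}}$. No cohomological boundedness is required.
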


\noindent
Applying Theorem \ref{MC} to the case where $k=\mbb{Q}_p$ and $\pi=p$,
we can recover Imai's theorem.
We should note that there is an another generalization of Imai's theorem
which is given by Kubo and Taguchi \cite{KT}. 
The main result of {\it loc.\ cit.} states that 
the torsion subgroup of $A(K(K^{1/p^{\infty}}))$ is finite, where
$A$ is an abelian variety over  $K$ with potential good reduction and 
$K(K^{1/p^{\infty}})$ is the extension field of $K$
by adjoining all $p$-power roots of all elements of $K$. 

For the proof of the above theorem,
the essential difficulty appears in the finiteness 
of the $p$-power torsion part $A(Kk_{\pi})[p^{\infty}]$ of $A(Kk_{\pi})_{\mrm{tors}}$.
For this, we proceed our arguments in more general settings. 
We study not only abelian varieties but also \'etale cohomology groups 
of proper smooth varieties.

\begin{theorem}
\label{MTp:var}
Let $X$ be a proper smooth variety over a $p$-adic field $K$
with potential good reduction.
Let $V$ be a $\mrm{Gal}(\overline{K}/K)$-stable subquotient of 
$H^i_{\aet}(X_{\overline{K}},\mbb{Q}_p(r))$ with $i\not=2r$.
Assume that $V^{\mrm{Gal}(\overline{K}/L)}\not=0$ for some finite extension $L/Kk_{\pi}$.
Then  $\mrm{Nr}_{k/\mbb{Q}_p}(\pi)$ 
is a $q$-Weil number of weight $-(i-2r)/h$
for some non-zero $h\in [-i+r,r]\bigcap \left( \bigcup_{1\le s\le e_G} (1/sd_G) \mbb{Z}\right)$.
Moreover, 
$q^r\mrm{Nr}_{k/\mbb{Q}_p}(\pi)^{-h}$ is an algebraic integer.
\end{theorem}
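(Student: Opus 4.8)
The goal is to analyze the action of Galois on a nonzero invariant vector inside an étale cohomology group, using the local theory of Lubin-Tate characters combined with $p$-adic Hodge-theoretic/Weil-number input. First I would reduce to the following setup: by potential good reduction of $X$, there is a finite Galois extension $K'/K$ over which $X$ acquires good reduction, and after enlarging the base field the representation $V$ becomes crystalline (hence its Frobenius eigenvalues are $q_{K'}$-Weil numbers of weight $i-2r$, where $q_{K'}$ is the residue cardinality of $K'$, by Weil conjectures for the special fibre). The hypothesis $V^{\mathrm{Gal}(\overline{K}/L)}\neq 0$ for some finite $L/Kk_\pi$ says that the restriction of $V$ to $\mathrm{Gal}(\overline{K}/L)$ has a trivial subrepresentation; equivalently, restricting $V$ to the absolute Galois group of the compositum $KK' k_\pi$ (a finite extension of $Kk_\pi$, after adjusting), some vector is fixed. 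The point is that $\mathrm{Gal}(\overline{K}/Kk_\pi)$ is \emph{large}: the only ambiguity between $\mathrm{Gal}(\overline{K}/K)$ and this subgroup is controlled by $\mathrm{Gal}(Kk_\pi/K)$, which (via local class field theory) is an open subgroup of $\cO_k^\times$ up to finite index, together with the arithmetic piece coming from $\mathrm{Nr}_{k/\mbb{Q}_p}(\pi)$.

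Next, the key mechanism: a $\mathrm{Gal}(\overline{K}/Kk_\pi)$-fixed vector $v\in V$ generates, under the full $\mathrm{Gal}(\overline{K}/K)$-action, a subrepresentation $W\subseteq V$ on which $\mathrm{Gal}(\overline{K}/K)$ acts through the finite-plus-Lubin-Tate quotient $\mathrm{Gal}(Kk_\pi/K)$ — more precisely $W$ is a subquotient of the induction/coinduction from $\mathrm{Gal}(\overline{K}/Kk_\pi)$ to $\mathrm{Gal}(\overline{K}/K)$ of the trivial character, so the action on $W$ factors through an abelian quotient closely tied to the Lubin-Tate character $\chi_\pi\colon \mathrm{Gal}(k^{\mathrm{ab}}/k)\to \cO_k^\times$. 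The Galois group $\mathrm{Gal}(Kk_\pi/Kk)$ is identified with an open subgroup of $\cO_k^\times$, while a Frobenius-type element acts via $\pi$, i.e. via the norm $\mathrm{Nr}_{k/\mbb{Q}_p}(\pi)$ after pushing down to $\mbb{Q}_p$. Thus the eigenvalues of (a power of) Frobenius on $W$ are, on one hand, products of powers of $\mathrm{Nr}_{k/\mbb{Q}_p}(\pi)$ (times roots of unity), and on the other hand — since $W\subseteq V$ — they are $q_{K'}$-Weil numbers of weight $i-2r$. Comparing absolute values under every archimedean place forces $|\mathrm{Nr}_{k/\mbb{Q}_p}(\pi)^h| = q^{-(i-2r)/2}$ for the relevant exponent $h$, with $h$ controlled by how the Lubin-Tate character enters the $p$-adic Hodge filtration: the Hodge-Tate weights of $V$ lie in $[-i+r, r]$ (this is the range for $H^i(r)$), and the weight multiplicities of $\chi_\pi$ relative to $k_G/\mbb{Q}_p$ force $h\in (1/sd_G)\mbb{Z}$ for some $1\le s\le e_G$. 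This gives the statement that $\mathrm{Nr}_{k/\mbb{Q}_p}(\pi)$ is a $q$-Weil number of weight $-(i-2r)/h$.

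Finally, the integrality assertion that $q^r\mathrm{Nr}_{k/\mbb{Q}_p}(\pi)^{-h}$ is an algebraic integer: this comes from the fact that the crystalline Frobenius on $H^i_{\mathrm{crys}}$ of the special fibre, suitably normalized, preserves a lattice, so its eigenvalues are algebraic integers after multiplying by $q_{K'}^{\,?}$; tracking the Tate twist by $r$ and the identification of the Frobenius eigenvalue on $W$ with $\mathrm{Nr}_{k/\mbb{Q}_p}(\pi)^{-h}$ (the sign of $h$ being a bookkeeping artifact of whether $\pi$ or $\pi^{-1}$ acts) yields that $q^r$ times this eigenvalue is integral over $\mbb{Z}$. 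One must be careful that $V$ is only a subquotient, not a subrepresentation, of $H^i(r)$; but subquotients of crystalline representations are crystalline, and the lattice/integrality statement passes to subquotients after inverting at most the bad primes, which here are absorbed into the factor $q^r$.

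\textbf{Main obstacle.} The delicate point is the precise determination of the exponent $h$ and its constraint to $\bigcup_{1\le s\le e_G}(1/sd_G)\mbb{Z}\cap[-i+r,r]$. This requires matching the Hodge-Tate decomposition of the restriction of $V$ to $\mathrm{Gal}(\overline{K}/Kk_\pi)$ against the known Hodge-Tate-Sen weights of powers of the Lubin-Tate character $\chi_\pi$ over the various embeddings of $k$ into $\overline{\mbb{Q}}_p$ — it is here that the Galois closure $k_G$, its degree $d_G$, and the ramification index $e_G$ enter, since the relevant embeddings are permuted by $\mathrm{Gal}(k_G/\mbb{Q}_p)$ and the weight can be a non-integer rational with denominator dividing $sd_G$. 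Getting this combinatorial/Sen-theoretic bookkeeping exactly right, rather than just up to a bounded factor, is the technical heart of the argument; the Weil-number absolute-value comparison and the crystalline integrality are comparatively standard once the setup is in place.
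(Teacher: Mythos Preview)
Your plan is correct and follows essentially the same approach as the paper. The paper packages the Lubin--Tate/Hodge--Tate analysis (your ``key mechanism'' and ``main obstacle'') into a separate abstract result (Theorem~\ref{MTp}) for semi-stable representations with prescribed Weil weights and Hodge--Tate range, and then deduces Theorem~\ref{MTp:var} in a few lines by invoking the crystalline comparison isomorphism and the Katz--Messing/Weil-conjecture input to verify that $D^K_{\mrm{cris}}(H^i_{\aet}(X_{\overline{K}},\mbb{Q}_p))$ has integral characteristic polynomial with roots of pure weight~$i$; your outline recovers exactly this structure, including the reduction to the Galois closure $k_G$ (the paper's Lemma~\ref{lemma:gal}) that produces the denominator $sd_G$.
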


\noindent
Applying Theorem \ref{MTp:var} to the case where $k=\mbb{Q}_p$ and $\pi=p$,
we obtain \cite[Theorem 1.5]{CSW} for $0$-th cohomology groups.
The assumption $i \not=2r$ in Theorem \ref{MTp:var} is essential 
as explained in the Introduction of \cite{KT}. 
The key ingredients for our proof are the theory of locally algebraic representations
(cf.\ \cite{Se})
and some ``weight arguments''
of eigenvalues of Frobenius on various objects.
For weight arguments, we use $p$-adic Hodge theory related with Lubin-Tate characters
and results on weights of a Frobenius operator on crystalline cohomologies (cf.\ \cite{CLS}, \cite{KM}, \cite{Na}).

\vspace{5mm}
\noindent
{\bf Notation :}
In this paper, we fix algebraic closures $\overline{\mbb{Q}}$ and $\overline{\mbb{Q}}_p$
of $\mbb{Q}$ and $\mbb{Q}_p$, respectively, and 
we fix an embedding $\overline{\mbb{Q}}\hookrightarrow \overline{\mbb{Q}}_p$.
If $F$ is a $p$-adic field, 
we denote by  $G_F$ and  $U_F$
the absolute Galois group $\mrm{Gal}(\overline{\mbb{Q}}_p/F)$ of $F$
and the unit group of the integer ring of  $F$,
respectively.
We also denote by 
$F^{\mrm{ur}}$  and $I_F$
the maximal unramified extension of $F$ in $\overline{\mbb{Q}}_p$ and 
the inertia subgroup $\mrm{Gal}(\overline{\mbb{Q}}_p/F^{\mrm{ur}})$ of $G_F$,
respectively.
We set $\Gamma_F:=\mrm{Hom}_{\mbb{Q}_p}(F,\overline{\mbb{Q}}_p)$.
If $F'/F$ is a finite extension,
we denote by $f_{F'/F}$ the residual extension degree of $F/F'$.
that is, the extension degree of the residue fields corresponding to 
$F'/F$. 
We put $f_{F}=f_{F/\mbb{Q}_p}$. 

\begin{acknowledgments}
The author would like to express his sincere gratitude to Professor Yuichiro
Taguchi for giving him useful
advices, especially Remark 2.10.
\end{acknowledgments}

\section{Proofs of main theorems}

Our goal is to prove results in the Introduction.
Let $K,k$ and $E$ be a finite extension of $\mbb{Q}_p$.
Let $\pi$ be a uniformizer of $k$ and 
$k_{\pi}$ the Lubin-Tate extension of $k$ associated with $\pi$.
We denote by $\chi_{\pi}\colon G_k \to k^{\times}$ the Lubin-Tate character
associated with $\pi$.
If we regard $\chi_{\pi}$ as a continuous 
character $k^{\times}\to k^{\times}$ 
by the local Artin map with arithmetic normalization,
then $\chi_{\pi}$ is characterized by the property that $\chi_{\pi}(\pi)=1$
and $\chi_{\pi}(u)=u^{-1}$ for any $u\in U_k$.
\begin{definition}
\label{Weil1}
Let $q_0>1$ be an integer.
A {\it $q_0$-Weil number {\rm (}resp.\ $q_0$-Weil integer{\rm )} of weight $w$} 
is an algebraic number (resp.\ algebraic integer) $\alpha$
such that $|\iota (\alpha)|=q_0^{w/2}$ for all embeddings 
$\iota\colon \mbb{Q(\alpha)}\hookrightarrow \mbb{C}$. 
\end{definition}

\begin{definition}
\label{Weil2}
Let $F$ be a finite extension of $\mbb{Q}_p$ 
with residual extension degree $f=f_F$
and $F_0/\mbb{Q}_p$ the maximal 
unramified subextension of $F/\mbb{Q}_p$.

\noindent
(1)
Let $D$ be a $\vphi$-module over $F$, that is, 
a finite dimensional $F_0$-vector space with 
$\vphi_{F_0}$-semilinear map $\vphi \colon D\to D$.
Then $\vphi^f\colon D\to D$ is a $F_0$-linear map.
We call $\mrm{det}(T-\vphi^f\mid  D)$ 
the {\it characteristic polynomial of $D$}.

\noindent
(2)
For  a $\mbb{Q}_p$-representation $U$ of $G_F$,
we set $D^F_{\mrm{cris}}(U):=(B_{\mrm{cris}}\otimes_{\mbb{Q}_p} U)^{G_F}$
and $D^F_{\mrm{st}}(U):=(B_{\mrm{st}}\otimes_{\mbb{Q}_p} U)^{G_F}$,
which are filtered $\vphi$-modules over $F$. 
Here, $B_{\mrm{cris}}$ and $B_{\mrm{st}}$ are usual $p$-adic period rings.
Note that we  have $D^F_{\mrm{cris}}(U)=D^F_{\mrm{st}}(U)$ if $U$ is crystalline.

\noindent
(3)
 Let $S$ be a set of rational numbers.
Let $U$  be a  potentially semi-stable $\mbb{Q}_p$-representation of $G_F$.
Suppose that $U|_{G_{F'}}$ is semi-stable for a finite extension $F'$ of $F$ 
with residue field $\mbb{F}_{q'}$.  
We say that $U$ has {\it Weil weights in $S$} if 
any root of the characteristic polynomial of $D^{F'}_{\mrm{st}}(U)$
is a $q'$-Weil number of weight $w$ for some $w\in S$.
(Note that this definition does not depend on the choice of $F'$.)
\end{definition}

The following theorem is a key to the proof of our main results.

\begin{theorem}
\label{MTp}
Let $S$ be a subset of $\mbb{Q} \smallsetminus  \{0 \}$.
Let $V$ be a 
semi-stable $\mbb{Q}_p$-representation of $G_K$ with 
Hodge-Tate weights in $[h_1,h_2]$.
Assume that $V$ has Weil weights in $S$ and 
$V^{\mrm{Gal}(\overline{K}/L)}\not=0$ for some finite extension $L/Kk_{\pi}$. Then

\noindent
{\rm (1)}  $\mrm{Nr}_{k/\mbb{Q}_p}(\pi)$ 
is a $q$-Weil number of weight $-w/h$
for some $w\in S$ and some non-zero
$h\in [h_1,h_2]\bigcap \left( \bigcup_{1\le s\le e_G} (1/sd_G) \mbb{Z}\right)$.

\noindent
{\rm (2)} If the coefficients of 
the characteristic polynomial of $D^K_{\mrm{st}}(V(-r))$
are algebraic integers for some integer $r$, then 
we can choose $h$ in (1) so that 
$q^r\mrm{Nr}_{k/\mbb{Q}_p}(\pi)^{-h}$ is an algebraic integer.
\if0
Assume that $V$ has Weil weights in $S$ and 
$\mrm{Nr}_{k/\mbb{Q}_p}(\pi)$ 
is not a $q$-Weil number of weight $-w/h$
for some $w\in S$ and 
$h\in [h_1,h_2]\bigcap \left( \bigcup_{1\le s\le e_G} (1/sd_G) \mbb{Z}\right)$.
Then we have
$V^{\mrm{Gal}(\overline{K}/L)}=0$ for any finite extension $L/Kk_{\pi}$.
\fi
\end{theorem}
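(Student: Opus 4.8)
The plan is to use the $G_L$-invariance hypothesis to cut out, after enlarging $K$, a semi-stable subrepresentation on which $G_K$ acts through a twist of the Lubin--Tate character, and then to pin down $\mrm{Nr}_{k/\mbb{Q}_p}(\pi)$ by a weight argument on Frobenius eigenvalues.

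\emph{Step 1 (reductions).} Since $q$, $d_G$, $e_G$, $\mrm{Nr}_{k/\mbb{Q}_p}(\pi)$ and $k_\pi$ depend only on $(k,\pi)$, and since semi-stability, Hodge--Tate weights and Weil weights are preserved under any finite base change of $K$ (one has $D^{K'}_{\mrm{st}}(V)=K'_0\otimes_{K_0}D^K_{\mrm{st}}(V)$, and passing to $K'$ raises the Frobenius eigenvalues to the power $f_{K'/K}$), I first replace $K$ by $Kk$ and assume $k\subseteq K$. Put $W:=\sum_{g\in G_K}g\cdot V^{G_L}\subseteq V$, a nonzero $G_K$-stable subspace. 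If $\widetilde L$ denotes the Galois closure of $L/K$, then $\widetilde L/Kk_\pi$ is still finite (a primitive element of $L/Kk_\pi$ is algebraic over $K$, and $\widetilde L$ is $Kk_\pi$ composed with the splitting field of its minimal polynomial over $K$), so $G_{\widetilde L}$, being normal in $G_K$ and contained in $G_L$, acts trivially on $W$. Writing $\widetilde L=K_1k_\pi$ for a finite $K_1/K$ (take $K_1=K(\beta)$ with $\beta$ a primitive element of $\widetilde L/Kk_\pi$) and replacing $K$ by $K_1$ and $V$ by $W$, I am reduced to the case where $k\subseteq K$ and $G_{Kk_\pi}=\ker(\chi_\pi|_{G_K})$ acts trivially on $V$. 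All hypotheses pass to $W$: it is a subrepresentation, hence semi-stable with Hodge--Tate weights in $[h_1,h_2]$, and the characteristic polynomials of $D^K_{\mrm{st}}(W)$ and $D^K_{\mrm{st}}(W(-r))$ divide those of $D^K_{\mrm{st}}(V)$ and $D^K_{\mrm{st}}(V(-r))$, so $W$ has Weil weights in $S$ and, in case (2), the characteristic polynomial of $D^K_{\mrm{st}}(W(-r))$ is again integral (a monic factor of a monic integral polynomial is integral); all of this survives the finite base change $K\rightsquigarrow K_1$.

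\emph{Step 2 (locally algebraic structure).} Now $\rho_V\colon G_K\to GL(V)$ factors through $\chi_\pi|_{G_K}\colon G_K\twoheadrightarrow H$ with $H:=\chi_\pi(G_K)\subseteq\cO_k^\times$ open, so the image is abelian. Enlarging coefficients to a finite Galois extension $E/\mbb{Q}_p$ containing $k_G$ and splitting $V$, I decompose $V\otimes_{\mbb{Q}_p}E=\bigoplus_j E(\eta_j)$ into characters $\eta_j\colon G_K\to E^\times$, each semi-stable, each factoring through $\chi_\pi|_{G_K}$, with Hodge--Tate weights among those of $V$ and with the roots of the characteristic polynomial of $D^K_{\mrm{st}}(\eta_j)$ among those of $D^K_{\mrm{st}}(V)$ --- hence $q_K$-Weil numbers of weights in $S$, where $q_K$ is the order of the residue field of $K$. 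Since each $\eta_j$ is semi-stable, hence Hodge--Tate, and factors through $\chi_\pi$, the theory of locally algebraic representations (cf.\ \cite{Se}) shows it is locally algebraic: $\eta_j=\epsilon_j\cdot\prod_{\tau\in\Gamma_k}(\tau\circ\chi_\pi|_{G_K})^{n^{(j)}_\tau}$ for integers $n^{(j)}_\tau$ and a finite-order character $\epsilon_j$. As the Lubin--Tate module is crystalline and comes from a one-dimensional formal group of $p$-height $[k:\mbb{Q}_p]$ equipped with an $\cO_k$-action, the Hodge--Tate data of the characters $\tau\circ\chi_\pi$ is explicit, and the Hodge--Tate weights of $\eta_j$ are correspondingly explicit $\mbb{Z}$-linear combinations of the $n^{(j)}_\tau$, all lying in $[h_1,h_2]$.

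\emph{Step 3 (weight argument) and the main obstacle.} The remaining task is to relate the Frobenius eigenvalue of $D^K_{\mrm{st}}(\eta_j)$ to $\mrm{Nr}_{k/\mbb{Q}_p}(\pi)$. Over a finite extension of $Kk_G$ on which each $\tau\circ\chi_\pi$ is crystalline with all embeddings realised, the eigenvalue of an appropriate power of Frobenius on $D_{\mrm{cris}}(\tau\circ\chi_\pi)$ equals $\tau(\pi)$ up to a root of unity and an explicit power of $q$; hence the corresponding eigenvalue of $\eta_j$ is, up to a root of unity and a power of $q$, the monomial $\prod_{\tau}\tau(\pi)^{n^{(j)}_\tau}$, and taking products over a $\mrm{Gal}(k_G/\mbb{Q}_p)$-orbit of embeddings, so as to descend to a $\mbb{Q}_p$-rational statement, turns this into a power of $\mrm{Nr}_{k/\mbb{Q}_p}(\pi)$. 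Comparing archimedean absolute values with the hypothesis that this eigenvalue is a $q_K$-Weil number of weight $w_j\in S$, and extracting the required roots, forces $\mrm{Nr}_{k/\mbb{Q}_p}(\pi)$ to be a $q$-Weil number of weight $-w_j/h$ for some nonzero $h$; the denominators $sd_G$ with $1\le s\le e_G$ are precisely those produced by this root extraction (the $d_G=[k_G:\mbb{Q}_p]$ from passing to $k_G$, the range $1\le s\le e_G$ from the ramification of $k_G/k$), while the Hodge--Tate constraint of Step 2 forces $h\in[h_1,h_2]$; this gives (1). For (2) one runs the same computation with $\eta_j(-r)$, whose Frobenius eigenvalue differs from that of $\eta_j$ by the factor $q_K^r$ coming from $D_{\mrm{cris}}(\mbb{Q}_p(-r))$ and which is an algebraic integer by hypothesis, and concludes after the same root extraction that $q^r\mrm{Nr}_{k/\mbb{Q}_p}(\pi)^{-h}$ is an algebraic integer for the matching $h$. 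Steps 1 and 2 are essentially formal; the real difficulty is the explicit $p$-adic Hodge theory of Step 3 --- determining the Frobenius eigenvalues of the crystalline characters $\tau\circ\chi_\pi$ precisely in terms of $\tau(\pi)$ and the correct powers of $q$, and bookkeeping the residual degrees and ramification of $k_G/k$ so that exactly the denominators $sd_G$ with $1\le s\le e_G$ occur, checking along the way that the finite-order twists $\epsilon_j$ and the descent through $k_G$ contribute only roots of unity and hence do not disturb Weil weights.
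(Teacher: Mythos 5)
Your overall route is the same as the paper's (reduce to an abelian action through $\mrm{Gal}(Kk_{\pi}/K)$, split into characters over a large coefficient field, invoke local algebraicity, compute Frobenius eigenvalues and norm down to $\mbb{Q}_p$), but the two points you defer are precisely the content of the paper, and one of them is asserted in a form that is not actually available. In Step 2 you claim, citing Serre, that each character $\eta_j$ is a finite-order twist of $\prod_{\tau\in\Gamma_k}(\tau\circ\chi_{\pi})^{n_{\tau}}$. Serre's theorem on Hodge--Tate abelian representations gives, on an open subgroup of inertia, a product of Lubin--Tate characters attached to the embeddings of the \emph{coefficient} field (equivalently, of a large field $K''$), one exponent per embedding of $K''$; to rewrite this with exponents indexed by $\Gamma_k$ and the single character $\chi_{\pi}$ one must prove that the exponents are constant on the fibres of $\Gamma_{K''}\to\Gamma_k$. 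That is the technical heart of the paper (Lemmas 2.4 and 2.5: extend the character to $\mrm{Gal}(k^{\mrm{ur}}k_{\pi}/k^{\mrm{ur}})$, use the factorization through $\mrm{Nr}_{K''/k}$ to get $\tilde{\rho}(x)=\tilde{\rho}(\tau x)$, and apply linear independence of embeddings), and it needs $k/\mbb{Q}_p$ Galois so that the translation orbits of $\mrm{Gal}(K''/k)$ on $\Gamma_{K''}$ coincide with the restriction fibres. For non-Galois $k$ your Step 2 claim is unjustified; indeed, if it were true as stated, the same weight argument would yield the sharper conclusion $h\in[h_1,h_2]\cap(1/d)\mbb{Z}$ with $d=[k:\mbb{Q}_p]$, which contradicts your own later assertion that the denominators $sd_G$ arise --- an internal inconsistency showing Step 3 was not actually carried out.

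Second, you never explain how $sd_G$ with $1\le s\le e_G$ enters: ``root extraction after passing to $k_G$'' is not an argument, since $\chi_{\pi}$ is attached to $k$, not $k_G$, and there is no a priori containment of $k_{\pi}$ in a Lubin--Tate extension of $k_G$. The paper produces it by constructing a finite \emph{unramified} extension $k'/k_G$ of degree $s\le e_G$ and a uniformizer $\pi'$ with $\mrm{Nr}_{k'/k}(\pi')=\pi^{f_{k'/k}}$, hence $k_{\pi}\subset k'_{\pi'}$ (Lemmas 2.7--2.8, via the norm characterization of containments of Lubin--Tate extensions and Serre's Local Fields, Ch.~V, \S 6, Prop.~10), and then runs the Galois case for $(k',\pi')$ and descends the norm. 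Finally, the computation you yourself flag as ``the real difficulty'' --- that the relevant Frobenius eigenvalues are exactly products of conjugates $\tau(\pi)^{-f_{K'/k}n_{\tau}}$ with $dh_1\le\sum_{\tau}n_{\tau}\le dh_2$, with \emph{no} stray powers of $q$ or roots of unity (your ``up to a root of unity and an explicit power of $q$'' is not what comes out), the finite-order twists being killed by a further finite extension $K'$ --- is the paper's Lemma 2.6, proved from the explicit $D^k_{\mrm{cris}}(k(\chi_{\pi}^{-1}))$ with $\vphi^{f_k}\mbf{e}=(1\otimes\pi)\mbf{e}$ and characteristic polynomial $\prod_{0\le i\le f_k-1}E^{\vphi^i}(T)$. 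As it stands your text is a correct plan whose essential steps (the paper's Lemmas 2.4--2.8) are left unproved, so it is not yet a proof.
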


\subsection{Proof of Theorem \ref{MTp}}

In this section, we prove Theorem \ref{MTp}. We begin with some lemmas.

\begin{lemma}
\label{lem0}
Let $F$ be a $p$-adic field and 
$(n_{\sigma})_{\sigma\in \Gamma_F}$  a family of integers.
If there exists an open subgroup $U$ of $U_F$ with the property that
$\prod_{\sigma\in \Gamma_F} \sigma(x)^{n_{\sigma}}=1$ for any $x\in U$,
then we have $n_{\sigma}=0$ for any $\sigma\in \Gamma_F$. 
\end{lemma}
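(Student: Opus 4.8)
The plan is to linearize the multiplicative relation by applying the $p$-adic logarithm and then to invoke the linear independence of distinct field embeddings. First I would replace $U$ by a smaller open subgroup: since $U$ is open in $U_F$ it contains $1+\mfm_F^N$ for some $N\ge 1$, and enlarging $N$ if necessary we may assume that $N$ is so large that the $p$-adic logarithm restricts to an isomorphism $\log\colon 1+\mfm_F^N\to\mfm_F^N$ (from the multiplicative group on the left to the additive group on the right). Put $U':=1+\mfm_F^N$, so that $U'\subseteq U$ and the hypothesis gives $\prod_{\sigma\in\Gamma_F}\sigma(x)^{n_\sigma}=1$ for every $x\in U'$.

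Next I would apply $\log$ to this identity. For $x\in U'$ and $\sigma\in\Gamma_F$, the element $\sigma(x)-1$ has positive $p$-adic valuation (equal to that of $x-1$), so $\log\sigma(x)$ converges in $\overline{\mbb{Q}}_p$; and since $\sigma$ is a continuous $\mbb{Q}_p$-algebra homomorphism, it commutes with the defining power series, so $\log\sigma(x)=\sigma(\log x)$. Using that $\log$ carries products of principal units to sums and that $\log 1=0$, the relation above becomes
$$\sum_{\sigma\in\Gamma_F}n_\sigma\,\sigma(\log x)=0\qquad\text{for all }x\in U'.$$
As $x$ ranges over $U'$, its logarithm $\log x$ ranges over all of $\mfm_F^N$, which spans $F$ over $\mbb{Q}_p$ (it is a nonzero ideal of $\cO_F$, hence of full rank). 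Therefore $\sum_{\sigma\in\Gamma_F}n_\sigma\,\sigma(y)=0$ for every $y\in F$.

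Finally, the elements of $\Gamma_F=\mrm{Hom}_{\mbb{Q}_p}(F,\overline{\mbb{Q}}_p)$ are pairwise distinct ring homomorphisms, hence linearly independent over $\overline{\mbb{Q}}_p$ by the Artin--Dedekind lemma on independence of characters; since the integers $n_\sigma$ lie in $\overline{\mbb{Q}}_p$, this forces $n_\sigma=0$ for all $\sigma\in\Gamma_F$, which is the assertion. The only delicate point is the passage through the logarithm — one must choose $N$ large enough that $\log$ is simultaneously defined and additive on each $\sigma(U')$ and that $\log(U')$ spans $F$ over $\mbb{Q}_p$ — but all of this is standard once $N$ exceeds the usual bound $e_F/(p-1)$, where $e_F$ is the absolute ramification index of $F$.
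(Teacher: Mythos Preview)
Your proof is correct and follows essentially the same route as the paper's: shrink $U$ so that the $p$-adic logarithm is defined, linearize to obtain $\sum_{\sigma}n_\sigma\sigma(y)=0$ on an open ideal (hence on all of $F$), and conclude by linear independence of embeddings. The only difference is that the paper, after citing Dedekind's theorem, also supplies a short direct argument via a Vandermonde determinant in the conjugates of a primitive element.
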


\begin{proof}
Replacing $U$ by a finite index subgroup,
we may assume that the $p$-adic logarithm map is defined on $U$.
Then we have $\sum_{\sigma\in \Gamma_F} n_{\sigma}\sigma(\log x)=0$ for any $x\in U$.
Since $\log U$ is an open ideal of the ring of integers of $F$,
we obtain
 $\sum_{\sigma\in \Gamma_F} n_{\sigma} \sigma (y)=0$ for any $y\in F$.
Although the desired fact $n_{\sigma}=0$ for any $\sigma\in \Gamma_F$
follows from  Dedekind's theorem \cite[\S 6, no. 2, Corollaire 2]{Bo} immediately, 
we also give a direct proof for this.
Take any $\alpha\in F$ such that $F=\mbb{Q}_p(\alpha)$
and let $\Gamma_F=\{\sigma_1=\mrm{id}, \sigma_{2},\dots ,\sigma_c \}$
where $c:=[F:\mbb{Q}_p]$.
Then we have 
$(n_{\sigma_1}, n_{\sigma_2},\dots ,n_{\sigma_c})X=\mbf{0}$
where $X$ is the $c\times c$ matrix with $(i,j)$-th component
$\sigma_i(\alpha)^{j-1}$.
Since $\mrm{det}\ X=\prod_{j>i} (\sigma_j(\alpha)-\sigma_i(\alpha))\not =0$,
we obtain $n_{\sigma_1}=n_{\sigma_2}=\cdots =n_{\sigma_c}=0$.
\end{proof}

\begin{lemma}
\label{lem1}
Let $E$ be a $p$-adic field and
$V$ an $E$-representation of $G_K$.
Assume that $k/\mbb{Q}_p$ is Galois,
$V$ is Hodge-Tate and 
 the $G_{Kk_{\pi}}$-action on $V$ factors through a finite quotient.
Then, there exist finite extensions $K'/K$ and $E'/E$ with $K',E'\supset k$   
such that any Jordan-H\"ollder factor of $(V\otimes_E E')|_{G_{K'}}$
is of the form 
$E'(\prod_{\sigma \in \Gamma_k}\sigma^{-1}\circ \chi^{r_{\sigma}}_{\pi})$
for some $r_{\sigma}\in \mbb{Z}$. 
Moreover, $r_{\sigma}$ is a Hodge-Tate weight of $V$.
\end{lemma}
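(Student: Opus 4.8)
The plan is to reduce to the case of Hodge--Tate characters via the theory of Hodge--Tate (more precisely, locally algebraic) representations, and then pin down the characters using the structure of $G_{Kk_\pi}$-stability together with Lemma \ref{lem0}. First I would enlarge $K$ so that $K \supset k$, and enlarge $E$ to $E'$ so that $E' \supset k$ and $E'$ contains all conjugates of $k$; after replacing $K$ by a finite extension we may also assume the finite quotient through which $G_{Kk_\pi}$ acts is trivial, i.e.\ $G_{Kk_\pi}$ acts trivially on $V$ (here I use that $k_\pi/k$ is abelian, so $Kk_\pi/K$ is Galois and the relevant open subgroups behave well). Replacing $V$ by $V \otimes_E E'$, I may assume $E = E'$. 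Passing to a Jordan--H\"older factor, it suffices to treat $V$ irreducible as an $E[G_K]$-module; the Hodge--Tate property is inherited by subquotients, so $V$ remains Hodge--Tate.

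Next I would exploit the fact that $G_{Kk_\pi}$ acts trivially. Since $\mrm{Gal}(Kk_\pi/K)$ is, via the Lubin--Tate character $\chi_\pi$, an open subgroup of $U_k \cong \cO_k^\times$ (after identifying $\mrm{Gal}(k_\pi/k)$ with $U_k$), the representation $V|_{G_K}$ factors through a quotient that is, up to finite error, abelian and identified with (an open subgroup of) $U_k$. Concretely, $V$ becomes an abelian, Hodge--Tate $E$-representation of $G_K$ whose restriction to inertia factors through $\chi_\pi$ up to a finite character. By Serre's theory of locally algebraic abelian representations (cf.\ \cite{Se}), such a representation is, on an open subgroup of inertia, given by a product of powers of the embeddings $\sigma\colon k \hookrightarrow \overline{\mbb{Q}}_p$ composed with $\chi_\pi$; that is, on an open subgroup $U$ of $U_k$ the character by which $G_K$ acts on a one-dimensional $E$-line in $V$ (after possibly enlarging $E$ so that $V|_{G_{K'}}$ is a sum of characters, which is automatic once the image is abelian and $E$ is large) has the form $u \mapsto \prod_{\sigma \in \Gamma_k} \sigma(u)^{-r_\sigma}$ for integers $r_\sigma$, matching the asserted shape $E(\prod_{\sigma} \sigma^{-1}\circ \chi_\pi^{r_\sigma})$.

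The identification of the exponents $r_\sigma$ with Hodge--Tate weights is then a computation with $p$-adic Hodge theory of Lubin--Tate characters: the Hodge--Tate--Sen weights of $\sigma^{-1}\circ \chi_\pi$ are $1$ in the $\sigma$-component of $\overline{\mbb{Q}}_p \otimes_{\mbb{Q}_p} \overline{\mbb{Q}}_p$ and $0$ elsewhere, so the Hodge--Tate weights of $\prod_\sigma \sigma^{-1}\circ \chi_\pi^{r_\sigma}$ are exactly the $r_\sigma$ (with multiplicity); comparing with the Hodge--Tate weights of the subquotient $V$ gives that each $r_\sigma$ occurs among the Hodge--Tate weights of $V$. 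The last uniqueness-of-form claim relies on Lemma \ref{lem0} to guarantee that different integer tuples $(r_\sigma)$ give genuinely distinct characters on any open subgroup of $U_k$, so the decomposition is well-posed.

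The main obstacle I expect is the second step: carefully justifying that, after the finite base changes, $V|_{G_{K'}}$ decomposes into one-dimensional pieces whose characters are \emph{exactly} of Lubin--Tate type, rather than merely being Hodge--Tate characters of some abelian extension. This requires invoking the precise statement that a Hodge--Tate (hence locally algebraic, by Sen--Serre) abelian character of $G_K$ whose associated inertia character is built from $\chi_\pi$ must be a monomial in the $\sigma^{-1}\circ\chi_\pi$ up to finite order, and then absorbing the finite-order twist into the finite extension $K'$. Getting the bookkeeping of which finite extensions to take --- and ensuring $E'$ is large enough for the semisimplification to split --- is where the real care is needed; the Hodge--Tate-weight identification and the non-degeneracy via Lemma \ref{lem0} are comparatively routine.
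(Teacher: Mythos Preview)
Your overall strategy matches the paper's: reduce to one-dimensional Hodge--Tate characters that are trivial on $G_{Kk_\pi}$, invoke Serre's local-algebraicity theorem, and then identify the result as a monomial in the $\sigma^{-1}\circ\chi_\pi$ for $\sigma\in\Gamma_k$. However, the step where you write ``By Serre's theory \dots\ given by a product of powers of the embeddings $\sigma\colon k\hookrightarrow\overline{\mbb{Q}}_p$ composed with $\chi_\pi$'' is precisely the heart of the lemma, and you have asserted it rather than proved it. Serre's theorem only tells you that on an open subgroup of inertia the character equals $\prod_{\tau\in\Gamma_E}\tau^{-1}\circ\chi_{\tau E}^{n_\tau}$ (equivalently, after passing to $U_{K''}$ via class field theory, $x\mapsto\prod_{\tilde\sigma\in\Gamma_{K''}}\tilde\sigma(x)^{-n_{\tilde\sigma}}$): the indexing set is $\Gamma_E$ or $\Gamma_{K''}$, not $\Gamma_k$. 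Nothing in Serre's statement forces the exponents $n_{\tilde\sigma}$ to be constant on the fibers of the restriction map $\Gamma_{K''}\to\Gamma_k$, and without that you cannot rewrite the product over $\Gamma_k$.

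The paper fills this gap with a genuine argument. Since $\rho$ is trivial on $G_{Kk_\pi}$, its locally algebraic part $\tilde\rho$ (once $\rho=\tilde\rho$ on $I_{K''}$) is trivial on $G_{(K'')^{\mrm{ur}}k_\pi}$; one then extends $\tilde\rho$ from the open subgroup $N'\subset U_k$ to a character $\hat\rho$ of all of $U_k$ (a small but non-automatic step), so that $\tilde\rho|_{U_{K''}}$ factors through $\mrm{Nr}_{K''/k}$. This yields $\tilde\rho(x)=\tilde\rho(\tau x)$ for all $\tau\in\mrm{Gal}(K''/k)$, and \emph{then} Lemma~\ref{lem0} is applied --- not merely for uniqueness, as you suggest, but to conclude that $n_{\tilde\sigma}=n_{\tilde\sigma'}$ whenever $\tilde\sigma|_k=\tilde\sigma'|_k$. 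Only after this does the expression collapse to $\prod_{\sigma\in\Gamma_k}\sigma^{-1}\circ\chi_\pi^{r_\sigma}$. Your final paragraph correctly flags this as the main obstacle, but your plan does not supply the mechanism (norm-invariance forcing $\mrm{Gal}(K''/k)$-invariance, then Lemma~\ref{lem0}) that actually resolves it.
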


\begin{proof}
Replacing $K$ by a finite extension,
we may assume that 
$G_{Kk_{\pi}}$ acts on $V$ trivial
and $K$ is a finite Galois extension of $k$.
Since the $G_K$-action on $V$ factors through the abelian group $\mrm{Gal}(Kk_{\pi}/K)$,
it follows from Schur's lemma that, for a finite extension $E'/E$ large enough, 
any  Jordan-H\"ollder factor $W$ of $V\otimes_E E'$ is of dimension $1$.  
Our goal is to show that $W$ is of   the required form. 
We may assume  $E'=E\supset K$.

Let $\rho\colon G_K\to GL_E(W)\simeq E^{\times}$
be the continuous homomorphism given by the $G_K$-action on $W$. 
Let $\tilde{E}$ be the Galois closure of $E/\mbb{Q}_p$ and take any
finite extension $K''/K$ which contains $\tilde{E}$.
Since $W$ is Hodge-Tate,
it follows from \cite[Chapter III,\ A.\ 5, Theorem 2]{Se} that
there exists an open subgroup $I$ of $I_{K''}$
such that $\rho=\prod_{\sigma\in \Gamma_{E}} \sigma^{-1}\circ \chi_{\sigma E}^{n_{\sigma}}$
on $I$ for some integer $n_{\sigma}$. 
Here, $\chi_{\sigma E}\colon G_{\sigma E}\to U_{\sigma E}$ is the
Lubin-Tate character associated  with $\sigma E$ 
(it depends on the choice of a uniformizer of $\sigma E$,
but its restriction to the inertia subgroup does not).
Put $\tilde{\rho}=\prod_{\sigma\in \Gamma_{E}} \sigma^{-1}\circ \chi_{\sigma E}^{n_{\sigma}}$, 
considered as a character of $G_{K''}$. 
Replacing $K''$ by a finite extension,
we may assume the following:
\begin{itemize}
\item[--] $K''/\mbb{Q}_p$ is Galois, 
$\mrm{Gal}(k_{\pi}/(k_{\pi}\cap K''))$ is torsion free and $\rho=\tilde{\rho}$ on $I_{K''}$.
\end{itemize} 
Since  $\rho|_{G_{Kk_{\pi}}}$ is trivial,
we have that $\tilde{\rho}$ is trivial on $I_{K''}\cap G_{Kk_{\pi}}
=G_{(K'')^{\mrm{ur}}k_{\pi}}$. 
Hence, putting  $N'=\mrm{Gal}((K'')^{\mrm{ur}}k_{\pi}/(K'')^{\mrm{ur}})$,
 we may regard $\tilde{\rho}|_{I_{K''}}$
as a representation of $N'$. 
Put $N=\mrm{Gal}(k^{\mrm{ur}}k_{\pi}/k^{\mrm{ur}})$.
Then $N'$ is canonically isomorphic to 
a torsion free finite index subgroup of $N\simeq U_k$, and thus 
we regard $N'$ as a  subgroup of $N$.

Now we claim that $\tilde{\rho}\colon N'\to \tilde{E}^{\times}$ extends to  
a continuous character $\hat{\rho}\colon N\to \overline{\mbb{Q}}_p^{\times}$. 
It follows from the theory of elementary divisors that 
we may regard 
$N=N_{\mrm{tor}}\oplus (\oplus^d_{i=1} \mbb{Z}_p)\supset 
\{ 0 \}\oplus (\oplus^d_{i=1} p^{m_i}\mbb{Z}_p)=N'$
with some integer $m_i\ge 0$. Here, $N_{\mrm{tor}}$ is the torsion subgroup of $N$
and $d:=[k:\mbb{Q}_p]$.  
Hence it suffices to show that 
any continuous character $p^m\mbb{Z}_p\to \overline{\mbb{Q}}_p^{\times}$ with $m>0$
extends to $\mbb{Z}_p\to \overline{\mbb{Q}}_p^{\times}$, but this is clear.
\if0
This follows immediately from the inflation-restriction sequence
$$
0\to H^1(\mbb{Z}_p/p^m\mbb{Z}_p, \overline{\mbb{Q}}_p^{\times})
\to H^1(\mbb{Z}_p, \overline{\mbb{Q}}_p^{\times})
\to H^1(p^m\mbb{Z}_p, \overline{\mbb{Q}}_p^{\times})
\to H^2(\mbb{Z}_p/p^m\mbb{Z}_p, \overline{\mbb{Q}}_p^{\times})
$$
and 
$H^2(\mbb{Z}_p/p^m\mbb{Z}_p, \overline{\mbb{Q}}_p^{\times})
\simeq \overline{\mbb{Q}}_p^{\times}/(\overline{\mbb{Q}}_p^{\times})^{p^m}=\{1\}$.
This finishes a proof of the claim. 
\fi

By local class field theory,
we may regard $\tilde{\rho}|_{I_{K''}}$ and $\hat{\rho}$
as characters of  $U_{K''}$ and $U_k$, respectively.
It follows from the construction of $\hat{\rho}$
that
we have $\tilde{\rho}(x)=\hat{\rho}(\mrm{Nr}_{K''/k}(x))$ for $x\in U_{K''}$.
In particular, we have 
\begin{equation}
\label{eq:1}
\tilde{\rho}(x)=\tilde{\rho}(\tau x)\ 
\end{equation}
for  $x\in U_{K''}$ and $\tau\in \mrm{Gal}(K''/k)$. 
On the other hand, by definition of $\tilde{\rho}$ 
and the condition that $K''/\mbb{Q}_p$ is Galois,
we have 
\begin{equation}
\label{eq:2}
\tilde{\rho}(x)  = \prod_{\sigma\in \Gamma_{E}} 
\sigma^{-1}\mrm{Nr}_{K''/\sigma E}(x^{-1})^{n_{\sigma}}
=\prod_{\tilde{\sigma}\in \Gamma_{K''}} 
\tilde{\sigma}^{-1}(x^{-1})^{n_{\tilde{\sigma}}}
\end{equation}
for  $x\in U_{K''}$ where $n_{\tilde{\sigma}}:=n_{\sigma}$ if $\tilde{\sigma}|_E=\sigma$.

We claim that $n_{\tilde{\sigma}}=n_{\tilde{\sigma}'}$ if 
$\tilde{\sigma}|_k=\tilde{\sigma}'|_k$.
By \eqref{eq:1} and \eqref{eq:2},
we have 
\begin{equation}
\label{eq:3}
\prod_{\tilde{\sigma}\in \Gamma_{K''}} 
\tilde{\sigma}^{-1}(x^{-1})^{n_{\tau\tilde{\sigma}}}
=\prod_{\tilde{\sigma}\in \Gamma_{K''}} 
\tilde{\sigma}^{-1}(x^{-1})^{n_{\tilde{\sigma}}}
\end{equation}
for  $x\in U_{K''}$ and $\tau\in \mrm{Gal}(K''/k)$. 
Choosing a lift $\hat{\sigma}\in \Gamma_{K''}$ for each element 
of $\mrm{Gal}(k/\mbb{Q}_p)$,
we have a decomposition 
$\Gamma_{K''}= \bigcup_{\hat{\sigma}} \hat{\sigma} \mrm{Gal}(K''/k)$.
Since $k/\mbb{Q}_p$ is Galois, 
we see that  $\mrm{Gal}(K''/k)$ acts on 
$\hat{\sigma} \mrm{Gal}(K''/k)$ stable and this action is 
transitive. 
By Lemma \ref{lem0}, we know that 
the family $(n_{\tilde{\sigma}})_{\tilde{\sigma}\in \Gamma_{K''}}$
is determined uniquely by the restriction of 
$\prod_{\sigma\in \Gamma_{K''}} 
(\tilde{\sigma}^{-1})^{n_{\tilde{\sigma}}}$
to any open subgroup of $U_{K''}$.
Hence the equation \eqref{eq:3} gives  $n_{\tilde{\sigma}}=n_{\tilde{\sigma}'}$ if
$\tilde{\sigma}|_k=\tilde{\sigma}'|_k$ as desired.

For any $\sigma\in \Gamma_k$, we define 
$r_{\sigma}:=n_{\tilde{\sigma}}$ for a lift $\tilde{\sigma}\in \Gamma_{K''}$
 of $\sigma$, which is independent of the choice of $\tilde{\sigma}$ 
by the claim just above.
Then we see 
$
\tilde{\rho}(x)  
=\prod_{\tilde{\sigma}\in \Gamma_{K''}} 
\tilde{\sigma}^{-1}(x^{-1})^{n_{\tilde{\sigma}}}
= \prod_{\sigma\in \Gamma_k} 
\sigma^{-1}\mrm{Nr}_{K''/k}(x^{-1})^{r_{\sigma}}
$
for  $x\in U_{K''}$. This implies 
$$
\tilde{\rho}=
\prod_{\sigma\in \Gamma_k} 
\sigma^{-1}\circ \chi_{\pi}^{r_{\sigma}}
$$
on $I_{K''}$. 
Now we define  $\psi \colon G_K\to E^{\times}$
by $\psi:= \rho\cdot \left(\prod_{\sigma\in \Gamma_k} 
\sigma^{-1}\circ \chi_{\pi}^{r_{\sigma}}\right)^{-1}$.
Then $\psi$ is trivial on $I_{K''}$ since $\rho=\tilde{\rho}=\prod_{\sigma\in \Gamma_k} 
\sigma^{-1}\circ \chi_{\pi}^{r_{\sigma}}$ on $I_{K''}$.
Furthermore,  $\psi$ is trivial on $G_{Kk_{\pi}}$ since 
$\chi_{\pi}$ and $\rho$ is trivial on $G_{Kk_{\pi}}$.
Therefore, putting $K'=(K'')^{\mrm{ur}}\cap Kk_{\pi}$,
then $K'/K$ is a finite extension and $\psi$ is trivial on $G_{K'}$.

Finally, we note that $r_{\sigma}$ is a Hodge-Tate weight of $V$
by \cite[Chapter III,\ A.5,\ Theorem 2]{Se}.
This is the end of the proof.
\end{proof}

\begin{lemma}
\label{lem1'}
Let $E$ be a $p$-adic field and
$V$ an $E$-representation of $G_K$.
Assume that $k/\mbb{Q}_p$ is Galois,
$V$ is potentially semi-stable with 
Hodge-Tate weights in $[h_1,h_2]$ and 
the $G_{Kk_{\pi}}$-action on $V$ factors through a finite quotient.
Then, there exists a finite extension $K'/Kk$  
which satisfy the following property:
$V|_{G_{K'}}$ is semi-stable and, 
for any root $\alpha$ of  the characteristic polynomial 
of $D^{K'}_{\mrm{st}}(V)$,
we have 
$$
\alpha=a^{f_{K'/k}},\quad 
a=\prod_{\tau\in \Gamma_k} \tau(\pi)^{-n_{\tau}} 
$$
for some integers $(n_{\tau})_{\tau\in \Gamma_k}$ such that
 $dh_1\le \sum_{\tau\in \Gamma_k} n_{\tau} \le dh_2$.
Here,  $d:=[k:\mbb{Q}_p]$.
\end{lemma}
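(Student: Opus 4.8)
The plan is to compute the characteristic polynomial of $D^{K'}_{\mrm{st}}(V)$ by reducing, via Lemma \ref{lem1}, to the one‑dimensional crystalline characters built out of $\chi_\pi$, and then to evaluate the relevant Frobenius scalar explicitly in terms of $\pi$. Since $V$ is potentially semi-stable it is in particular Hodge--Tate, so Lemma \ref{lem1} provides finite extensions $K_1/K$ and $E_1/E$, both containing $k$, such that every Jordan--H\"older factor of $(V\otimes_E E_1)|_{G_{K_1}}$ is of the form $E_1(\eta)$ with $\eta=\prod_{\sigma\in\Gamma_k}\sigma^{-1}\circ\chi_\pi^{r_\sigma}$, where each exponent $r_\sigma$ is a Hodge--Tate weight of $V$ and hence lies in $[h_1,h_2]$. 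First I would then take $K'$ to be a finite extension of $K_1$ containing $Kk$ over which $V$ becomes semi-stable (possible since $V$ is potentially semi-stable and semi-stability persists under further enlargement of $K'$), and enlarge $E_1$ so that it also contains the Galois closure of $k$ and the maximal unramified subextension $K'_0$ of $K'/\mbb{Q}_p$. Then $(V\otimes_E E_1)|_{G_{K'}}$ is semi-stable, its Jordan--H\"older factors are the restrictions $E_1(\eta)|_{G_{K'}}$ given by the same product formula (a one-dimensional representation stays irreducible under restriction), and each such factor, being one-dimensional and semi-stable, is crystalline.

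Next I would reduce to a single factor. Since $D^{K'}_{\mrm{st}}(V\otimes_E E_1)=D^{K'}_{\mrm{st}}(V)\otimes_E E_1$ compatibly with $\varphi$, as a $K'_0$-vector space carrying $\varphi^{f_{K'}}$ it is a direct sum of $[E_1:E]$ copies of $D^{K'}_{\mrm{st}}(V)$, so the two objects have the same set of roots of their characteristic polynomials; and since $D^{K'}_{\mrm{st}}$ is exact on semi-stable representations and turns $\varphi^{f_{K'}}$-characteristic polynomials into products, it suffices to show that for one Jordan--H\"older factor $W=E_1(\eta|_{G_{K'}})$ every root of the characteristic polynomial of $D^{K'}_{\mrm{cris}}(W)$ is of the asserted shape.

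For this core step, note first that $W$ is one-dimensional crystalline over $E_1\supseteq K'_0$, so $D^{K'}_{\mrm{cris}}(W)$ is free of rank one over $K'_0\otimes_{\mbb{Q}_p}E_1$, the operator $\varphi^{f_{K'}}$ is multiplication by a scalar $\alpha\in E_1^\times$, the roots of its characteristic polynomial over $K'_0$ are precisely the $\mbb{Q}_p$-conjugates of $\alpha$, and $\alpha$ is multiplicative in the character $\eta$. Using that $D_{\mrm{cris}}$ base-changes along $K'/k$, that $\varphi^{f_{K'}}=(\varphi^{f_k})^{f_{K'/k}}$, and that $\varphi^{f_k}$ acts on $D^k_{\mrm{cris}}\otimes_{k_0}K'_0$ through the Frobenius of $K'_0/k_0$ on the second factor, one gets $\alpha(\xi|_{G_{K'}})=\alpha_k(\xi)^{f_{K'/k}}$ for a crystalline character $\xi$ of $G_k$, where $\alpha_k(\xi)$ is the corresponding scalar over $k$. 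It then remains to identify $\alpha_k$ on the Lubin--Tate pieces: $\chi_\pi$ is crystalline and the scalar $\alpha_k$ attached to $\sigma^{-1}\circ\chi_\pi$ equals $\tau_\sigma(\pi)^{-1}$ for a suitable embedding $\tau_\sigma\in\Gamma_k$ (reflecting that the Lubin--Tate formal group is one-dimensional, so that the pertinent sum of Hodge--Tate weights is $1$). Combining, $\alpha=\big(\prod_{\sigma\in\Gamma_k}\tau_\sigma(\pi)^{-r_\sigma}\big)^{f_{K'/k}}=a^{f_{K'/k}}$ with $a=\prod_{\tau\in\Gamma_k}\tau(\pi)^{-n_\tau}$ and $n_\tau=\sum_{\sigma:\,\tau_\sigma=\tau}r_\sigma$, whence $\sum_{\tau\in\Gamma_k}n_\tau=\sum_{\sigma\in\Gamma_k}r_\sigma\in[dh_1,dh_2]$ because $|\Gamma_k|=d$ and each $r_\sigma\in[h_1,h_2]$; finally a $\mbb{Q}_p$-conjugate of such an $a^{f_{K'/k}}$ is again of the same form, Galois conjugation merely permuting $\Gamma_k$ and hence the exponents $n_\tau$ without altering their sum.

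The hard part will be this last identification — the explicit description of $D_{\mrm{cris}}$ of the Lubin--Tate character, i.e.\ the equality $\alpha_k(\sigma^{-1}\circ\chi_\pi)=\tau_\sigma(\pi)^{-1}$ — which is the genuine $p$-adic Hodge theory input (coming from the theory of the Lubin--Tate formal group together with the cited results on Frobenius weights). Everything else is routine bookkeeping: the behaviour of $D_{\mrm{st}}$ under coefficient extension and in short exact sequences, the base change of $D_{\mrm{cris}}$ along $K'/k$, and the comparison of $\varphi^{f_{K'}}$ with $\varphi^{f_k}$ that is responsible for the exponent $f_{K'/k}$ in the statement.
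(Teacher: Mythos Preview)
Your proposal is correct and rests on the same two inputs as the paper's proof: Lemma~\ref{lem1} to reduce to one-dimensional characters built from $\chi_\pi$, and the explicit description of $D^k_{\mrm{cris}}(k(\chi_\pi^{-1}))$ (the cited result of Conrad/Colmez that $\varphi^{f_k}\mbf{e}=(1\otimes\pi)\mbf{e}$). The organization of the characteristic-polynomial computation, however, differs. The paper forgets the $E_1$-structure early: it shows via a trace calculation that $k(\sigma^{-1}\circ\chi_\pi^{r_\sigma})^{\mrm{ss}}\simeq k(\chi_\pi^{r_\sigma})^{\mrm{ss}}$ as $\mbb{Q}_p[G_{K'}]$-modules (so the twist by $\sigma^{-1}$ can be dropped), and then carries out an explicit block-matrix computation of $\det(T-\varphi^{f_k}\mid D^k_{\mrm{cris}}(k(\chi_\pi^{-1})))=\prod_{i}E^{\varphi^i}(T)$ to see that its roots are exactly the $\tau(\pi)$, $\tau\in\Gamma_k$. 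You instead keep the $E_1$-linear structure: noting that $\varphi^{f_{K'}}$ is $K'_0\otimes_{\mbb{Q}_p}E_1$-linear on the rank-one module, hence multiplication by a single $\alpha\in E_1^\times$, and that $\alpha=\alpha_k(\eta)^{f_{K'/k}}$ with $\alpha_k(\sigma^{-1}\circ\chi_\pi)=\sigma^{-1}(\pi)^{-1}$ read off directly from the Conrad/Colmez formula after base-changing coefficients along $k\xrightarrow{\sigma^{-1}}k\hookrightarrow E_1$. Your route bypasses both the trace trick and the matrix calculation, at the cost of the (easy) observation that the $K'_0$-characteristic polynomial of multiplication by $\alpha\in E_1$ on $K'_0\otimes_{\mbb{Q}_p}E_1$ coincides with the $\mbb{Q}_p$-characteristic polynomial of $\alpha$ on $E_1$, so that the remaining roots are the $\mbb{Q}_p$-conjugates of $\alpha$ and hence again of the stated shape. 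Both arguments are complete; yours is a bit more streamlined, while the paper's makes the full list of roots over $k_0$ explicit.
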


\begin{proof}
By Lemma \ref{lem1}, there exist  finite extensions $K'/K$ and $E'/E$ 
with $E',K'\supset k$
which satisfy the following: 
\begin{itemize}
\item[--] $V|_{G_{K'}}$ is semi-stable and any Jordan-H\"ollder factor $W$ of $(V\otimes_{E} E')|_{G_{K'}}$
is of the form 
$E'(\prod_{\sigma \in \Gamma_k}\sigma^{-1}\circ \chi^{r_{\sigma}}_{\pi})$
for some $r_{\sigma}\in [h_1,h_2]$.
In particular, $W$ is crystalline.
\end{itemize}
Replacing $E$ by a finite extension, we may assume $E'=E$.
Now we take a root $\alpha$ of  the characteristic polynomial 
of $D^{K'}_{\mrm{st}}(V)$,
and choose $W$ so that $\alpha$ is a root of the  
 the characteristic polynomial 
of $D^{K'}_{\mrm{cris}}(W)$.

To study $\alpha$,  we  first consider the characteristic polynomial 
of $D^{K'}_{\mrm{cris}}(E(\sigma^{-1}\circ \chi^{r_{\sigma}}_{\pi}))$
for $\sigma\in \Gamma_k$. 
Let $K_0'$ be the maximal unramified subextension of $K'/\mbb{Q}_p$
and put  $q'=p^{f_{K'}}$. 
We note that we have an isomorphism 
$k(\sigma^{-1}\circ \chi^{r_{\sigma}}_{\pi})^{\mrm{ss}}
\simeq k(\chi^{r_{\sigma}}_{\pi})^{\mrm{ss}}$
of $\mbb{Q}_p[G_{K'}]$-modules (here, ``$\mrm{ss}$'' stands for the semi-simplification of $\mbb{Q}_p[G_{K'}]$-modules).
In fact, for any $g\in G_{K'}$, we have
\begin{align*}
\mrm{Tr}_{\mbb{Q}_p}(g \mid k(\sigma^{-1}\circ \chi^{r_{\sigma}}_{\pi}))
& = \mrm{Tr}_{k/\mbb{Q}_p}
(\mrm{Tr}_{k}(g \mid k(\sigma^{-1}\circ \chi^{r_{\sigma}}_{\pi})) 
= \mrm{Tr}_{k/\mbb{Q}_p}(\sigma^{-1}\chi^{r_{\sigma}}_{\pi}(g)) \\
& = \mrm{Tr}_{k/\mbb{Q}_p}(\chi^{r_{\sigma}}_{\pi}(g)) 
 = \mrm{Tr}_{k/\mbb{Q}_p}
(\mrm{Tr}_{k}(g \mid k(\chi^{r_{\sigma}}_{\pi})) 
= \mrm{Tr}_{\mbb{Q}_p}(g \mid k(\chi^{r_{\sigma}}_{\pi})).
\end{align*}
(Here, for a representation $U$ of a group $G$ over a field $F$ and $g\in G$,
we denote by $\mrm{Tr}_F(g \mid U)$ the trace of the $g$-action on the  
$F$-vector space $U$.) 
Therefore, we have 
\begin{equation}
\label{char2}
\mrm{det}(T-\vphi^{f_{K'}} \mid D^{K'}_{\mrm{cris}}(E(\sigma^{-1}\circ \chi^{r_{\sigma}}_{\pi})))
=\mrm{det}(T-\vphi^{f_{K'}} \mid D^{K'}_{\mrm{cris}}(k(\chi^{r_{\sigma}}_{\pi})))^{[E:k]}.
\end{equation}
To study the roots of \eqref{char2}, we recall the explicit description of 
$D^k_{\mrm{cris}}(k(\chi^{-1}_{\pi}))$ (cf.\ \cite[Proposition B.4]{Con}. 
See also \cite[Proposition 9.10]{Col}). 
Let $k_0$ be the maximal unramified subextension of $k/\mbb{Q}_p$. 
By definition, we have $f_k=[k_0:\mbb{Q}_p]$ and $q=p^{f_k}$.
Then $D^k_{\mrm{cris}}(k(\chi^{-1}_{\pi}))$ is a free $(k_0 \otimes_{\mbb{Q}_p} k)$-module 
of rank one, and we can take a basis $\mbf{e}$ of  $D^k_{\mrm{cris}}(k(\chi^{-1}_{\pi}))$
such that $\vphi^{f_k}(\mbf{e})=(1\otimes \pi)\mbf{e}$. 
We claim 
\begin{equation}
\label{char}
\mrm{det}(T-\vphi^{f_k} \mid D^k_{\mrm{cris}}(k(\chi^{-1}_{\pi})))=\prod_{0\le i\le f_k-1}E^{\vphi^i}(T)
\end{equation}
where $E(T)=T^e+\sum^{e-1}_{j=0}a_jT^j \in k_0[T]$ is the minimal polynomial 
of $\pi$ over $k_0$ and $E^{\vphi^i}(T)=T^e+\sum^{e-1}_{j=0}\vphi^i(a_j)T^j$. 
To show this, it suffices to show that 
the characteristic polynomial of the homomorphism
$1\otimes \pi\colon k_0\otimes_{\mbb{Q}_p} k \to k_0\otimes_{\mbb{Q}_p} k $
of  $k_0$-modules
coincides with the right hand side of \eqref{char}. 
(Here, the $k_0$-action on  $k_0\otimes_{\mbb{Q}_p} k$
is given by $a.(x\otimes y):=ax\otimes y$ for $a,x\in k_0$ and $y\in k$.)
We consider a natural isomorphism 
$$
k_0\otimes_{\mbb{Q}_p} k_0\simeq \oplus_{j\in \mbb{Z}/f_k\mbb{Z}} k_{0,j},\ a\otimes b\mapsto (a\vphi^j(b))_{j} 
$$
where $k_{0,j}=k_0$. For $0\le s\le f_k-1$, let $e_s\in k_0\otimes_{\mbb{Q}_p} k_0$ be the element
which corresponds to $(\delta_{sj})_j\in \oplus_{j\in \mbb{Z}/f_k\mbb{Z}} k_{0,j}$
where $\delta_{sj}$ is the Kronecker delta.  
Then $\{e_j(1\otimes \pi^i) \mid 0\le j\le f_k-1,\ 0\le i\le e-1\}$
is a $k_0$-basis of $k_0\otimes_{\mbb{Q}_p} k$.
We see that 
the matrix of   $1\otimes \pi\colon k_0\otimes_{\mbb{Q}_p} k \to k_0\otimes_{\mbb{Q}_p} k $
associated with the ordered basis
$
\langle e_0,\dots ,e_{f_k-1},e_0(1\otimes \pi),\dots ,e_{f_k-1}(1\otimes \pi),
\dots ,e_0(1\otimes \pi^{e-1}),\dots ,e_{f_k-1}(1\otimes \pi^{e-1}) \rangle
$
is
$$
\left(
\begin{array}{cccc}
O  & O & \cdots & -A_0\\
I_{f_k}  & O & \cdots  & -A_1 \\
\vdots    & \ddots    &    & \vdots \\
O    & \cdots    &    I_{f_k}  & -A_{e-1}
\end{array}
\right) 
$$
where $I_{f_k}$ is the $f_k\times f_k$ identity matrix and  
$A_i$ is the $f_k\times f_k$ diagonal matrix
with diagonal entries $a_i,\vphi(a_i),\dots ,\vphi^{f_k-1}(a_i)$.
Now it is an easy exercise to check that the characteristic polynomial of this matrix 
is $\prod_{0\le i\le f_k-1}E^{\vphi^i}(T)$ as desired.

By the claim \eqref{char}, we know that any root of the characteristic polynomial of
$D^{K'}_{\mrm{cris}}(k(\chi_{\pi}))$ is of the form $\tau(\pi)^{-f_{K'/k}}$
for some $\tau\in \Gamma_k$.
Hence, by \eqref{char2}, 
any root of the characteristic polynomial of
$D^{K'}_{\mrm{cris}}(E(\sigma^{-1}\circ \chi^{r_{\sigma}}_{\pi}))$ 
is of the form $\prod_{\tau\in \Gamma_k} \tau(\pi)^{-f_{K'/k}n^{\sigma}_{\tau}}$
with $\sum_{\tau\in \Gamma_k} n^{\sigma}_{\tau} =r_{\sigma}$.
Therefore,
since $\alpha$ is a root of the characteristic polynomial of
$D^{K'}_{\mrm{cris}}(W)=
D^{K'}_{\mrm{cris}}(E(\prod_{\sigma\in \Gamma_k} \sigma^{-1}\circ \chi^{r_{\sigma}}_{\pi}))$, 
we have
$$
\alpha=\prod_{\tau\in \Gamma_k} \tau(\pi)^{-f_{K'/k}n_{\tau}} 
$$
with $-\sum_{\tau\in \Gamma_k} n_{\tau} 
=-\sum_{\sigma\in \Gamma_k}r_{\sigma}=:R$.
We note that 
$R$ is an integer such that $-dh_2\le R\le -dh_1$
since we have $r_{\sigma}\in [h_1,h_2]$. 
This completes the proof.
\end{proof}

We need the following two standard lemmas which describe inclusion properties of 
two Lubin-Tate extensions.

\begin{lemma}
\label{lemma:LText}
Let $k_2/k_1$ be a finite extension of $p$-adic fields
with residual extension degree $f$.
For $i=1,2$, let $\pi_i$ be a uniformizer of $k_i$ 
and $k_{i,\pi_i}/k_i$
the Lubin-Tate extension associated with $\pi_i$.

\noindent
{\rm (1)} We have $\mrm{Nr}_{k_2/k_1}(\pi_2)=\pi_1^{f}$
if and only if  $k_{1,\pi_1}\subset k_{2,\pi_2}$.

\noindent
{\rm (2)}  $\pi_1^{-f} \mrm{Nr}_{k_2/k_1}(\pi_2)$ is a root of unity
if and only if  
there exists a finite extension $M/k_{2,\pi_2}$ 
such that
$k_{1,\pi_1}\subset M$. 
If this is the case, we can take $M$ to be 
the degree $\sharp \mu_{\infty}(k_1)$ subextension in $k^{\mrm{ab}}_2/k_{2,\pi_2}$. 
Here, $\mu_{\infty}(k_1)$ is the set of roots of unity in $k_1$,
\end{lemma}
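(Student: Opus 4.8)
The plan is to reduce everything to the local Artin map and the explicit description of Lubin-Tate extensions via their norm groups. Recall that, under the arithmetically normalized reciprocity map, the Lubin-Tate extension $k_{i,\pi_i}/k_i$ is precisely the abelian extension whose norm group is the closure of $\pi_i^{\mbb{Z}}$ in $k_i^{\times}$; equivalently, $k_{i,\pi_i}$ is the fixed field of $\chi_{\pi_i}$ restricted via $\mrm{Art}_{k_i}$, and $\mrm{Gal}(k_{i,\pi_i}/k_i)\simeq U_{k_i}$ with $k_{i,\pi_i}=k_i^{\mrm{ur},(\pi_i)}$ in the usual notation where $k_i^{\mrm{ab}}=k_i^{\mrm{ur}}\cdot k_{i,\pi_i}$ and $\pi_i$ maps to the Frobenius on $k_i^{\mrm{ur}}$ and to $1$ on $k_{i,\pi_i}$. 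The containment $k_{1,\pi_1}\subset k_{2,\pi_2}$ of abelian extensions of $k_1$ (note $k_{2,\pi_2}/k_1$ need not be abelian, so one must be slightly careful: I will instead use that $k_{1,\pi_1}\subset k_{2,\pi_2}$ iff $k_{1,\pi_1}\subset k_{2,\pi_2}\cap k_1^{\mrm{ab}}$ and translate via the norm map on idele class groups / local unit groups).

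For part (1): by the functoriality of the Artin map, $k_{1,\pi_1}\subset k_{2,\pi_2}$ holds if and only if the norm $\mrm{Nr}_{k_2/k_1}$ carries the norm group of $k_{2,\pi_2}/k_2$ into that of $k_{1,\pi_1}/k_1$. The norm group of $k_{2,\pi_2}/k_2$ inside $k_2^{\times}$ is topologically generated by $\pi_2$, and $\mrm{Nr}_{k_2/k_1}(U_{k_2})=U_{k_1}$ (surjectivity of the norm on units for the unramified... more precisely, one uses that the image is an open subgroup and combines with the valuation computation), while $v_{k_1}(\mrm{Nr}_{k_2/k_1}(\pi_2))=f$. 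Hence the image of $\overline{\pi_2^{\mbb{Z}}}$ under the norm is $\overline{\mrm{Nr}_{k_2/k_1}(\pi_2)^{\mbb{Z}}}$, and this lies in $\overline{\pi_1^{\mbb{Z}}}$ if and only if $\mrm{Nr}_{k_2/k_1}(\pi_2)\in \overline{\pi_1^{\mbb{Z}}}\cap\{v_{k_1}=f\}=\pi_1^{f}\cdot(\overline{\pi_1^{\mbb{Z}}}\cap U_{k_1})=\{\pi_1^f\}$, since $\overline{\pi_1^{\mbb{Z}}}\cap U_{k_1}=\{1\}$ (the closure of a cyclic group generated by a uniformizer meets the units trivially). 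This gives the stated equivalence; I would cite \cite{Se} (Local Fields / the Lubin-Tate chapter) or Lubin-Tate's original paper for the norm-group description, rather than reproving it.

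For part (2): the condition "there exists a finite extension $M/k_{2,\pi_2}$ with $k_{1,\pi_1}\subset M$" is equivalent to $k_{1,\pi_1}\cap \overline{\mbb{Q}}_p$ being contained in the compositum $k_{2,\pi_2}\cdot k_1^{\mrm{ur}}$ of $k_{2,\pi_2}$ with some (equivalently, the maximal) unramified extension — because enlarging $k_{2,\pi_2}$ to a finite extension $M$ can only add finitely much ramification and the extension $k_{1,\pi_1}/k_1$ is totally ramified, so the "new" part of $k_{1,\pi_1}$ that is not already in $k_{2,\pi_2}$ must be killed after a finite base change, which forces it into an unramified-times-$k_{2,\pi_2}$ situation. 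One then runs the same norm-group computation as in (1) but now modulo the unramified part: $k_{1,\pi_1}$ is absorbed into $M\supset k_{2,\pi_2}$ after a finite extension iff the image of $\pi_2$ under $\mrm{Nr}_{k_2/k_1}$ lies in $\pi_1^{f}\cdot\overline{\langle\text{roots of unity in }k_1\rangle}$, i.e. iff $\pi_1^{-f}\mrm{Nr}_{k_2/k_1}(\pi_2)$ is a root of unity (here I use that the torsion subgroup of $U_{k_1}$ is exactly $\mu_{\infty}(k_1)$ and that a closed subgroup of $k_1^\times$ of finite index over $\overline{\pi_1^{\mbb Z}}$ corresponds to a finite extension). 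For the final sentence, when $\zeta:=\pi_1^{-f}\mrm{Nr}_{k_2/k_1}(\pi_2)$ is a root of unity, it has order dividing $\sharp\mu_{\infty}(k_1)=:n$, so $\mrm{Nr}_{k_2/k_1}(\pi_2)^n=\pi_1^{fn}$; by part (1) applied with $\pi_2^n$... more directly, $\mrm{Nr}_{k_2/k_1}(\pi_2)\in \pi_1^f\cdot U_{k_1}[n]$ shows that the norm group of $k_{2,\pi_2}/k_2$ maps into the norm group of the subextension of $k_1^{\mrm{ab}}/k_{1,\pi_1}$ cut out by $U_{k_1}[n]$, which has degree $n$ over $k_{1,\pi_1}$; pulling back through $\mrm{Nr}_{k_2/k_1}$ and reinterpreting inside $k_2^{\mrm{ab}}$ identifies the claimed degree-$n$ subextension $M$ of $k_2^{\mrm{ab}}/k_{2,\pi_2}$ as one containing $k_{1,\pi_1}$.

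The main obstacle I anticipate is bookkeeping of the interplay between $k_{2,\pi_2}/k_1$ being non-abelian and the need to compare norm groups inside $k_1^{\times}$: one cannot naively "take the norm group of $k_{2,\pi_2}$ over $k_1$". The clean fix is to work consistently at the level of $k_2$ first (where $k_{2,\pi_2}$ is abelian with a known norm group) and then transport via $\mrm{Nr}_{k_2/k_1}$, using the compatibility $\mrm{Art}_{k_1}\circ\mrm{Nr}_{k_2/k_1}=\mrm{res}_{k_2/k_1}\circ\,\mrm{Art}_{k_2}$; the subtlety is making sure the unramified directions are handled correctly, which is where the hypothesis "residual extension degree $f$" and the identity $\mrm{Nr}_{k_2/k_1}(U_{k_2})=U_{k_1}$ do the work. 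Everything else is a routine valuation-and-torsion computation in $k_1^{\times}\cong \pi_1^{\mbb{Z}}\times U_{k_1}$ with $U_{k_1}\cong \mu_{\infty}(k_1)\times\mbb{Z}_p^{[k_1:\mbb{Q}_p]}$.
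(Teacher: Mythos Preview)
Your approach via norm groups and the paper's approach via explicit Artin-map restrictions are two sides of the same coin: both rest on the functoriality $\mrm{Art}_{k_1}\circ\mrm{Nr}_{k_2/k_1}=\mrm{res}\circ\mrm{Art}_{k_2}$. The paper simply evaluates $\mrm{Art}_{k_2}(\pi_2)$ (respectively its $h$-th power, for (2)) on $k_{1,\pi_1}$ and on $k_1^{\mrm{ur}}$ separately and reads off the norm condition directly from $k_1^{\mrm{ab}}=k_{1,\pi_1}\cdot k_1^{\mrm{ur}}$; your norm-group translation is equivalent once cleaned up.

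Two points need fixing. First, the aside $\mrm{Nr}_{k_2/k_1}(U_{k_2})=U_{k_1}$ is false unless $k_2/k_1$ is unramified; fortunately you never actually use it, since all you need is $v_{k_1}(\mrm{Nr}_{k_2/k_1}(\pi_2))=f$ together with $\overline{\pi_1^{\mbb Z}}\cap U_{k_1}=\{1\}$. Second, your intermediate reformulation of (2) --- that the existence of $M$ is equivalent to $k_{1,\pi_1}\subset k_{2,\pi_2}\cdot k_1^{\mrm{ur}}$ --- is vacuous, because $k_{2,\pi_2}\cdot k_1^{\mrm{ur}}=k_{2,\pi_2}\cdot k_2^{\mrm{ur}}=k_2^{\mrm{ab}}$ always contains $k_{1,\pi_1}\subset k_1^{\mrm{ab}}$. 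The correct norm-group translation is the one you in fact carry out afterwards: the minimal $M$ is $k_{1,\pi_1}k_{2,\pi_2}$, abelian over $k_2$ with norm group $\mrm{Nr}_{k_2/k_1}^{-1}(\overline{\pi_1^{\mbb Z}})\cap\overline{\pi_2^{\mbb Z}}$, and this has finite index in $\overline{\pi_2^{\mbb Z}}$ if and only if some power $\pi_2^h$ lies in $\mrm{Nr}_{k_2/k_1}^{-1}(\overline{\pi_1^{\mbb Z}})$, i.e.\ $(\pi_1^{-f}\mrm{Nr}_{k_2/k_1}(\pi_2))^h=1$. Drop the unramified detour and state this directly; then the degree bound $h\mid\sharp\mu_{\infty}(k_1)$ follows immediately, matching the paper's final sentence.
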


\begin{proof}
For $i=1,2$, we  denote by 
$k_i^{\mrm{ur}}$ and $k_i^{\mrm{ab}}$ the maximal unramified extension of $k_i$
and the maximal abelian extension of $k_i$, respectively.
We recall that 
the Artin map $\mrm{Art}_{k_i}\colon k_i^{\times}\to  \mrm{Gal}(k_i^{\mrm{ab}}/k_i)$ 
associated with $k_i$
satisfies 
$\mrm{Art}_{k_i}(\pi_i)|_{k_{i,\pi_i}}=\mrm{id}$ and 
$\mrm{Art}_{k_i}(\pi_i)|_{k_i^{\mrm{ur}}}=\mrm{Frob}_{k_i}$,
where $\mrm{Frob}_{k_i}$ is the geometric Frobenius of $k_i$.

\noindent
(1) 
Suppose  $\mrm{Nr}_{k_2/k_1}(\pi_2)=\pi_1^{f}$.
For any lift $\sigma\in G_{k_2}$ of $\mrm{Art}_{k_2}(\pi_2)$, we have 
$$
\sigma|_{k_{1,\pi_1}}
=(\mrm{Art}_{k_2}(\pi_2)|_{k_1^{\mrm{ab}}})|_{k_{1,\pi_1}}
=\mrm{Art}_{k_1}(\mrm{Nr}_{k_2/k_1}(\pi_2))|_{k_{1,\pi_1}}
=\mrm{Art}_{k_1}(\pi_1)^f|_{k_{1,\pi_1}}
=\mrm{id}
$$
Since the intersection of the fixed fields (in $\overline{\mbb{Q}}_p$)
of such $\sigma$'s is $k_{2,\pi_2}$,
we obtain the desired result.

Conversely, suppose $k_{1,\pi_1}\subset k_{2,\pi_2}$. Then we have
$$
\mrm{Art}_{k_1}(\mrm{Nr}_{k_2/k_1}(\pi_2))|_{k_{1,\pi_1}}
=\mrm{Art}_{k_2}(\pi_2)|_{k_{1,\pi_1}}
=(\mrm{Art}_{k_2}(\pi_2)|_{k_{2,\pi_2}})|_{k_{1,\pi_1}}
=\mrm{id}
$$
and
$$
\mrm{Art}_{k_1}(\mrm{Nr}_{k_2/k_1}(\pi_2))|_{k_1^{\mrm{ur}}}
=\mrm{Art}_{k_2}(\pi_2)|_{k_1^{\mrm{ur}}}
=(\mrm{Art}_{k_2}(\pi_2)|_{k_2^{\mrm{ur}}})|_{k_1^{\mrm{ur}}}
=\mrm{Frob}_{k_2}|_{k_1^{\mrm{ur}}}
=\mrm{Frob}^f_{k_1}.
$$
Thus we have 
$\mrm{Art}_{k_1}(\mrm{Nr}_{k_2/k_1}(\pi_2))
=\mrm{Art}_{k_1}(\pi_1^f)$, which shows  $\mrm{Nr}_{k_2/k_1}(\pi_2)=\pi_1^{f}$.

\noindent
(2) A very similar proof to that of (1) proceeds.
Suppose that $\pi_1^{-f} \mrm{Nr}_{k_2/k_1}(\pi_2)$ is a root of unity.
If we denote by $h$ the order of the set of roots of unity in $k_1$,
then we have $\mrm{Nr}_{k_2/k_1}(\pi_2^h)=\pi_1^{fh}$.
We see that 
any lift $\sigma\in G_{k_2}$ of $\mrm{Art}_{k_2}(\pi_2^h)$ fixes $k_{1,\pi_1}$.
This implies that $k_{1,\pi_1}$ is contained in the degree $h$ subextension in 
$k^{\mrm{ab}}_2/k_{2,\pi_2}$.

Suppose that there exists a finite extension $M/k_{2,\pi_2}$ 
such that $k_{1,\pi_1}\subset M$. Then $M':=k_{1,\pi_1}k_{2,\pi_2}$ is 
a finite subextension in  $k^{\mrm{ab}}_2/k_{2,\pi_2}$.
Put $h=[M':k_{2,\pi_2}]$.
Since $\mrm{Art}_{k_2}(\pi_2^h)|_{M'}$ is the identity map,
we have
$
\mrm{Art}_{k_1}(\mrm{Nr}_{k_2/k_1}(\pi_2^h))|_{k_{1,\pi_1}}
=\mrm{id}
$
and
$
\mrm{Art}_{k_1}(\mrm{Nr}_{k_2/k_1}(\pi_2^h))|_{k_1^{\mrm{ur}}}
=\mrm{Frob}^{fh}_{k_1}.
$
Thus we have 
$\mrm{Art}_{k_1}(\mrm{Nr}_{k_2/k_1}(\pi_2^h))
=\mrm{Art}_{k_1}(\pi_1^{fh})$, which shows  
$\mrm{Nr}_{k_2/k_1}(\pi_2^h)=\pi_1^{fh}$.

\end{proof}

We recall that  $k_G$ is the Galois closure of 
$k/\mbb{Q}_p$ and $d_G:=[k_G:\mbb{Q}_p]$.

\begin{lemma}
\label{lemma:gal}
There exist a finite unramified extension $k'/k_G$ 
and a uniformizer $\pi'$ of $k'$ 
which satisfy the following.
\begin{itemize}
\item $\mrm{Nr}_{k'/k}(\pi')=\pi^{f_{k'/k}}$,
\item $k_{\pi}\subset k'_{\pi'}$, where $k'_{\pi'}$ is the Lubin-Tate extension of $k'$ 
associated with $\pi'$,
\item the extension $k'/\mbb{Q}_p$ is Galois, and 
\item $[k':\mbb{Q}_p]=sd_G$ for some integer $1\le s\le e_G$.
\end{itemize}
\end{lemma}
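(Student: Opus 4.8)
The plan is to reduce the four required properties to a single norm identity over $k_G$ and then to kill the residual ``unit obstruction'' by an unramified base change. First observe that, by Lemma~\ref{lemma:LText}(1) applied to $k\subset k'$, the condition $k_\pi\subset k'_{\pi'}$ is \emph{equivalent} to $\mrm{Nr}_{k'/k}(\pi')=\pi^{f_{k'/k}}$; moreover every unramified extension of $k_G$ has the shape $k_G\cdot\mbb{Q}_{p^N}$, hence is automatically Galois over $\mbb{Q}_p$, and if $[k':k_G]=s$ then $[k':\mbb{Q}_p]=sd_G$. So it suffices to find an unramified extension $k'/k_G$ of some degree $s\le e_G$ together with a uniformizer $\pi'$ of $k'$ satisfying $\mrm{Nr}_{k'/k}(\pi')=\pi^{f_{k'/k}}$. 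Fix a uniformizer $\pi_G$ of $k_G$. Since $v_k\bigl(\mrm{Nr}_{k_G/k}(\pi_G)\bigr)=f_{k_G/k}=v_k\bigl(\pi^{f_{k_G/k}}\bigr)$, the element
$$
u:=\mrm{Nr}_{k_G/k}(\pi_G)\cdot\pi^{-f_{k_G/k}}
$$
lies in $U_k$, and it is the only obstruction to taking $k'=k_G$ itself.

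The heart of the matter is to bound the order $s$ of the image $\bar u$ of $u$ in $U_k/\mrm{Nr}_{k_G/k}(U_{k_G})$. Let $L$ be the maximal subextension of $k_G/k$ which is abelian over $k$. By the norm limitation theorem, $\mrm{Nr}_{k_G/k}(k_G^{\times})=\mrm{Nr}_{L/k}(L^{\times})$; intersecting with $U_k$ and using the elementary fact that $\mrm{Nr}_{M/k}(M^{\times})\cap U_k=\mrm{Nr}_{M/k}(U_M)$ for any finite $M/k$ (a valuation computation, since $v_k\circ\mrm{Nr}_{M/k}=f_{M/k}\cdot v_M$), we get $\mrm{Nr}_{k_G/k}(U_{k_G})=\mrm{Nr}_{L/k}(U_L)$. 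Hence $U_k/\mrm{Nr}_{k_G/k}(U_{k_G})\cong U_k/\mrm{Nr}_{L/k}(U_L)$ is, via the local Artin map, the inertia subgroup of $\mrm{Gal}(L/k)$, a group of order $e_{L/k}$; and $e_{L/k}\mid e_G$ because $L\subseteq k_G$. Therefore $s\mid e_G$, in particular $1\le s\le e_G$. I expect this to be the only non-formal step: it is exactly here that the bound ``$s\le e_G$'' of the statement is forced.

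Finally, take $k'/k_G$ to be the unramified extension of degree $s$. Then $[k':\mbb{Q}_p]=sd_G$ with $1\le s\le e_G$, $k'/\mbb{Q}_p$ is Galois, and $\pi_G$ is still a uniformizer of $k'$. Because the norm is surjective on units along unramified extensions, $\mrm{Nr}_{k'/k}(U_{k'})=\mrm{Nr}_{k_G/k}\bigl(\mrm{Nr}_{k'/k_G}(U_{k'})\bigr)=\mrm{Nr}_{k_G/k}(U_{k_G})$, and since $\bar u^{\,s}=1$ we may pick $w\in U_{k'}$ with $\mrm{Nr}_{k'/k}(w)=u^{-s}$. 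Set $\pi':=\pi_G w$, a uniformizer of $k'$. Transitivity of the norm together with $sf_{k_G/k}=f_{k'/k_G}f_{k_G/k}=f_{k'/k}$ gives
$$
\mrm{Nr}_{k'/k}(\pi_G)=\mrm{Nr}_{k_G/k}\bigl(\pi_G^{\,s}\bigr)=\bigl(u\,\pi^{f_{k_G/k}}\bigr)^{s}=u^{s}\pi^{f_{k'/k}},
$$
whence $\mrm{Nr}_{k'/k}(\pi')=u^{s}\pi^{f_{k'/k}}\cdot u^{-s}=\pi^{f_{k'/k}}$. By Lemma~\ref{lemma:LText}(1) this gives $k_\pi\subset k'_{\pi'}$, so $k'$ and $\pi'$ satisfy all four required properties.
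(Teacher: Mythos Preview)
Your proof is correct, and it takes a genuinely different route from the paper's. The paper invokes \cite[Chapter V, \S 6, Proposition 10]{Se} (Serre, \emph{Corps Locaux}) as a black box: applied to the totally ramified extension $k_G/k_{G,0}$ (where $k_{G,0}$ is the maximal unramified subextension of $k_G/k$), it produces an unramified extension $\tilde k_0/k_{G,0}$ of degree at most $e_G$ such that $\pi$ itself is a norm $\mrm{Nr}_{k'/\tilde k_0}(\pi')$ with $k'=k_G\tilde k_0$; the norm identity over $k$ then drops out from $\mrm{Nr}_{\tilde k_0/k}(\pi)=\pi^{f_{\tilde k_0/k}}=\pi^{f_{k'/k}}$. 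You instead isolate the unit obstruction $u=\mrm{Nr}_{k_G/k}(\pi_G)\,\pi^{-f_{k_G/k}}$ and bound its order modulo $\mrm{Nr}_{k_G/k}(U_{k_G})$ via the norm limitation theorem and the local Artin map, identifying $U_k/\mrm{Nr}_{k_G/k}(U_{k_G})$ with the inertia subgroup of $\mrm{Gal}(L/k)$ for $L$ the maximal abelian subextension of $k_G/k$. This is essentially what Serre's proposition is doing under the hood, so your argument has the virtue of being self-contained and making explicit \emph{why} the bound is $e_G$; the paper's version is terser but relies on the reader tracking down the reference. Both approaches yield the same $k'$ up to the precise choice of $s$ (you take $s$ to be the exact order of $\bar u$, which in fact divides $e_G$, a slightly sharper observation than the paper's ``$\le e_G$'').
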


\begin{proof}
Let $k_{G,0}/k$ be the maximal unramified subextension in $k_G/k$.
By \cite[Chapter V, \S 6, Proposition 10]{Se},
there exists an unramified extension $\tilde{k}_0$ over $k_{G,0}$
of degree at most $[k_G:k_{G,0}](=e_G)$ such that
$\pi=\mrm{Nr}_{k'/\tilde{k}_0}(\pi')$ 
for some $\pi'\in (k')^{\times}$, where 
$k':=k_G\tilde{k}_0$.
Since $k_G/\mbb{Q}_p$ is Galois and 
$k'/k_G$ is unramified, 
we see that $k'/\mbb{Q}_p$ is Galois.
We also see that $\pi'$ is a uniformizer of $k'$.
Since $k_G\cap \tilde{k}_0=k_{G,0}$,
we have $[k':k_G]=[\tilde{k}_0:k_{G,0}]\le e_G$.
Thus we obtain $[k':\mbb{Q}_p]=[k':k_G][k_G:\mbb{Q}_p]=sd_G$ 
for some integer $1\le s\le e_G$.
Furthermore, we have 
$\mrm{Nr}_{k'/k}(\pi')
=\mrm{Nr}_{\tilde{k}_0/k}(\mrm{Nr}_{k'/\tilde{k}_0}(\pi'))
=\mrm{Nr}_{\tilde{k}_0/k}(\pi)
=\pi^{f_{k'/k}}$. 
By Lemma \ref{lemma:LText},
we have $k_{\pi}\subset k'_{\pi'}$.
\end{proof}

Now we are ready to prove Theorem \ref{MTp}.

\begin{proof}[Proof of Theorem \ref{MTp}]
First we consider the case where $k/\mbb{Q}_p$ is Galois.
Replacing $L$ by a finite extension,
we may assume that 
$L/K$ is Galois. Then $V^{G_L}$ is a $G_K$-stable submodule of $V$.
By Lemma \ref{lem1'},
there exists a finite extension  $K'/Kk$
such that 
any root $\alpha$ 
of the characteristic polynomial of
$D^{K'}_{\mrm{st}}(V^{G_L})$ 
is of the form 
$$
\alpha=a^{f_{K'/k}},\quad 
a=\prod_{\tau\in \Gamma_k} \tau(\pi)^{-n_{\tau}} 
$$
with some integers $(n_{\tau})_{\tau\in \Gamma_k}$
such that $dh_1\le \sum_{\tau\in \Gamma_k} n_{\tau}\le dh_2$.
Here, $d:=[k:\mbb{Q}_p]$.
Put $R:=-\sum_{\tau\in \Gamma_k} n_{\tau}$.
Then we have 
\begin{equation}
\label{norm}
\prod_{\sigma\in \Gamma_k} \sigma(a)=
\prod_{\tau\in \Gamma_k} \prod_{\sigma\in \Gamma_k} \sigma \tau(\pi)^{-n_{\tau}}
=\prod_{\tau\in \Gamma_k} \mrm{Nr}_{k/\mbb{Q}_p}(\pi)^{-n_{\tau}}
=\mrm{Nr}_{k/\mbb{Q}_p}(\pi)^R.
\end{equation}
Since $V^{G_L}$ has Weil weights in $S$,
we see that $\sigma(a)$ is a $q$-Weil number of weight $w\in S$ 
for any $\sigma\in \Gamma_k$.
Thus it follows from the condition $w\not=0$ and the equation \eqref{norm}
that we have $R\not=0$.
Therefore, we obtain that 
$\mrm{Nr}_{k/\mbb{Q}_p}(\pi)$ is a $q$-Weil number of weight $-w/h$
where $h:=-R/d\in [h_1,h_2]\cap (1/d)\mbb{Z}$. 
This shows Theorem \ref{MTp} (1).
Now Theorem \ref{MTp} (2) follows from the fact that we have 
$(q^r\mrm{Nr}_{k/\mbb{Q}_p}(\pi)^{-h})^d=\mrm{Nr}_{k/\mbb{Q}_p}(q^ra)$
and $(q^ra)^{f_{K'/k}}=q_{K'}^r\alpha$ is a root of 
the characteristic polynomial of $D^K_{\mrm{st}}(V(-r))$.
Thus we obtained a  proof of Theorem \ref{MTp} in the case where $k/\mbb{Q}_p$ is Galois.

Next we consider the case where $k/\mbb{Q}_p$ is not necessarily Galois. 
Take a finite extension $k'/k_G$ and a uniformizer $\pi'$ of $k'$ 
as in Lemma \ref{lemma:gal}.
Put $d'=[k':\mbb{Q}_p]$. 
We have $d'=sd_G$ for some $1\le s\le e_G$.
Let $q'$ be the order of the residue field of $k'$.
Let $L'$ be the composite field of $L$ and $k'_{\pi'}$, 
which is a finite extension of $Kk'_{\pi'}$.
Assume that $V^{G_L}$ is not zero.
Since $V^{G_{L'}}$ is also not zero 
and the extension $k'/\mbb{Q}_p$ is Galois,
we know that 
$\mrm{Nr}_{k'/\mbb{Q}_p}(\pi')$
is a $q'$-Weil number of weight  $-w/h$
for some $w\in S$ and $h \in [h_1,h_2]\cap (1/d')\mbb{Z}$. 
By the equation $\mrm{Nr}_{k'/k}(\pi')=\pi^{f_{k'/k}}$, 
we have $\mrm{Nr}_{k'/\mbb{Q}_p}(\pi')
=(\mrm{Nr}_{k/\mbb{Q}_p}(\pi))^{f_{k'/k}}$,
and hence
$\mrm{Nr}_{k/\mbb{Q}_p}(\pi)$
is a $q$-Weil number of weight 
$-w/h$.
Furthermore, we have 
$q'^{r}\mrm{Nr}_{k'/\mbb{Q}_p}(\pi')^{-h}
=(q^{r}\mrm{Nr}_{k/\mbb{Q}_p}(\pi)^{-h})^{f_{k'/k}}$.
This completes the proof of 
Theorem \ref{MTp}.
\end{proof}

\subsection{Proofs of Theorems \ref{MC} and  \ref{MTp:var}}
We  prove Theorems \ref{MC} and  \ref{MTp:var} in the Introduction.
We start with a proof of Theorem \ref{MTp:var}.

\begin{proof}[Proof of Theorem \ref{MTp:var}]
Let the notation be as in the theorem. 
Replacing $K$ by a finite extension,
we may assume that $X$ has good reduction over $K$.
Then  we know that $V$ is crystalline
with Hodge-Tate weights in $[-i+r,r]$.
Let $K_0$ be the maximal unramified subextension of $K/\mbb{Q}_p$.
Put $q_K=p^{f_{K}}$, the order of the residue field of $K$.
By Theorem \ref{MTp}, 
it suffices to show that the characteristic polynomial of 
$D^{K}_{\mrm{cris}}(H^i_{\aet}(X_{\overline{K}},\mbb{Q}_p))$
has integer coefficients and its roots are $q_K$-Weil numbers of weight $i$.
Let $Y$ be the special fiber of 
a proper smooth model of $X$ over the integer ring of $K$.
By the crystalline conjecture shown by Faltings (cf.\ \cite{Fa}), 
we have an isomorphism 
$D^{K}_{\mrm{cris}}(H^i_{\aet}(X_{\overline{K}},\mbb{Q}_p)) 
\simeq K_0\otimes_{W(\mbb{F}_{q_K})} H^i_{\mrm{cris}}(Y/W(\mbb{F}_{q_K}))$
of $\vphi$-modules over $K_0$.  
It follows from Corollary 1.3 of \cite{CLS} 
(cf.\ \cite[Theorem 1]{KM} and \cite[Remark 2.2.4 (4)]{Na})
that the characteristic polynomial of 
$K_0\otimes_{W(\mbb{F}_{q_K})} H^i_{\mrm{cris}}(Y/W(\mbb{F}_{q_K}))$ coincides with 
$\mrm{det}(T-\mrm{Frob}_{\mbb{F}_{q_K}}\mid H^i_{\aet}(X_{\overline{K}},\mbb{Q}_{\ell}))$
for any prime $\ell \not =p$. 
Hence the result follows by the Weil Conjecture (cf.\ \cite{De1}, \cite{De2}).
\end{proof}

Finally, we prove Theorem \ref{MC}. 
Let 
$A$ be an abelian variety over a $p$-adic field $K$ and let $\ell$ be any prime number.
We denote by $T_{\ell}(A)$  the $\ell$-adic Tate module of $A$ 
and set $V_{\ell}(A):=T_{\ell}(A)\otimes_{\mbb{Z}_{\ell}} \mbb{Q}_{\ell}$.
It is well-known that 
we have   $G_K$-equivalent isomorphisms 
$V_{\ell}(A)\simeq H^1_{\aet}(A_{\overline{K}},\mbb{Q}_{\ell})^{\vee}$
and $V_{\ell}(A)/T_{\ell}(A)\simeq A(\overline{K})[\ell^{\infty}]$.
Here, $g$ is the dimension of $A$ and 
 $A(\overline{K})[\ell^{\infty}]$ 
is the $\ell$-power torsion subgroup of $A(\overline{K})$.
Furthermore, for an algebraic extension $L/K$,
the $\ell$-power torsion subgroup $A(L)[\ell^{\infty}]$ of $A(L)$ is finite 
if and only if $V_{\ell}(A)^{G_L}=0$. 
Below we denote by $L$ any finite extension of $Kk_{\pi}$. 
Assume that $A$ has potential good reduction 
and $N_{k/\mbb{Q}_p}(\pi)$ satisfies the condition in the statement of Theorem  \ref{MC}. 
For the proof of Theorem  \ref{MC}, it is enough to show that 
both the $p$-part and 
the prime-to-$p$ part of $A(L)_{\mrm{tor}}$ are finite. \\

\noindent
{\bf Finiteness of the $p$-part of $A(L)_{\mrm{tor}}$ : } 
If we put $W=V_p(A)^{G_L}$, then it is enough to show $W=0$.
Replacing $L$ by a finite extension,
we may suppose that the extension $L/K$ is Galois.
Then the $G_K$-action on $V_p(A)$ preserves  $W$, and thus 
the dual representation $W^{\vee}$ of $W$ is a quotient 
representation of $H^1_{\aet}(A_{\overline{K}},\mbb{Q}_p)$.
By Theorem \ref{MTp:var}, we have $W^{\vee}=(W^{\vee})^{G_L}=0$,
which implies $W=0$ as desired. \\

\noindent
{\bf Finiteness of the prime-to-$p$ part of $A(L)_{\mrm{tor}}$ : } 
The finiteness of the prime-to-$p$ part of $A(L)_{\mrm{tor}}$ 
immediately  follows from the following more general proposition. 

\begin{proposition}
\label{ell-adic}
Let $X$ be a proper smooth variety over  a $p$-adic field $K$
with potential good reduction.
For any prime number $\ell\not=p$,
let  $V$ be a $G_K$-stable subquotient of 
$H^i_{\aet}(X_{\overline{K}},\mbb{Q}_{\ell}(r))$.
Let $L/\mbb{Q}_p$ be an algebraic extension with finite residue field.
Assume $i\not=2r$.

\noindent
{\rm (1)} We have $V^{G_L}=0$ for any $\ell \not=p$.

\noindent
{\rm (2)} Let $T$ be a  $G_K$-stable $\mbb{Z}_{\ell}$-lattice of $V$.
Then we have $(V/T)^{G_L}=0$ for almost all $\ell$. 
\end{proposition}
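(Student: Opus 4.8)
The plan is to reduce the statement, by passing to good reduction and applying $\ell$-adic smooth and proper base change, to a question about the action of a power of geometric Frobenius on an \emph{unramified} $\mbb{Q}_\ell$-representation, and then to conclude by purity: since $i\neq 2r$, such a Frobenius has no eigenvalue equal to $1$. Concretely, after replacing $K$ by a finite extension $K'$ over which $X$ acquires good reduction and replacing $L$ by $LK'$ (which still has finite residue field, and only shrinks $G_L$), we may assume $X$ has good reduction over $K$ and $L\supseteq K$. Then, just as in the proof of Theorem \ref{MTp:var}, smooth proper base change shows that $H^i_{\aet}(X_{\overline{K}},\mbb{Q}_\ell(r))$, hence any $G_K$-subquotient $V$ of it, is unramified, so the $G_K$-action on $V$ factors through $\mrm{Gal}(K^{\mrm{ur}}/K)\simeq\widehat{\mbb{Z}}$. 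Since $K^{\mrm{ur}}\cap L$ is an unramified extension of $K$ inside $L$, its residue field is finite; writing $m:=[K^{\mrm{ur}}\cap L:K]<\infty$ and using that $K^{\mrm{ur}}/K$ is abelian, the image of $G_L$ in $\mrm{Gal}(K^{\mrm{ur}}/K)$ is the closed subgroup topologically generated by $\Phi:=\mrm{Frob}_K^{m}$. Hence $V^{G_L}=\ker(\Phi-1\mid V)$ and, $T$ being $G_K$-stable so that $\Phi(T)=T$, the rule $v+T\mapsto(\Phi-1)v+(\Phi-1)T$ identifies $(V/T)^{G_L}$ with $T/(\Phi-1)T$ provided $\Phi-1$ is injective on $V$.

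For (1): the eigenvalues of $\mrm{Frob}_K$ on $V$ occur among those on $H^i_{\aet}(X_{\overline{K}},\mbb{Q}_\ell(r))$, which by the Weil conjectures (used already in the proof of Theorem \ref{MTp:var}) are $q_K$-Weil numbers of weight $i-2r$. Since $i\neq 2r$, every such eigenvalue has archimedean absolute value $q_K^{(i-2r)/2}\neq 1$, so neither it nor its $m$-th power equals $1$; thus $\Phi-1$ is injective on $V$ and $V^{G_L}=\ker(\Phi-1\mid V)=0$.

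For (2): by the previous paragraph $\Phi-1$ is injective on $V$, so $(V/T)^{G_L}\simeq T/(\Phi-1)T$ is finite of order $\ell^{v_\ell(\det(\Phi-1\mid T))}$, and it suffices to bound $v_\ell(\det(\Phi-1\mid T))$ uniformly in $\ell$ and in the choices of $V$ and $T$. Consider instead $V(-r)$, a $G_K$-subquotient of $H^i_{\aet}(X_{\overline{K}},\mbb{Q}_\ell)$: its characteristic polynomial $P(T)$ for $\Phi$ is, over $\mbb{Q}_\ell$, a monic factor of the characteristic polynomial $C(T)$ of $\Phi$ on $H^i_{\aet}(X_{\overline{K}},\mbb{Q}_\ell)$, and by the Weil conjectures $C(T)\in\mbb{Z}[T]$ is independent of $\ell\neq p$; since $C$ is monic its roots, hence those of $P$, are integral over $\mbb{Z}_\ell$, so by Gauss's lemma $P(T)\in\mbb{Z}_\ell[T]$ and $P\mid C$ in $\mbb{Z}_\ell[T]$. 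Unwinding the Tate twist gives $\det(\Phi-1\mid T)=\pm q_K^{-mr\dim V}P(q_K^{mr})$, and $v_\ell(q_K)=0$ since $\ell\neq p$, whence $v_\ell(\det(\Phi-1\mid T))=v_\ell(P(q_K^{mr}))\le v_\ell(C(q_K^{mr}))=v_\ell(E_0)$ where $E_0:=C(q_K^{mr})\in\mbb{Z}$ is independent of $\ell$. Finally $E_0\neq 0$: writing $C(q_K^{mr})=\prod_\beta(q_K^{mr}-\beta^m)$ over the eigenvalues $\beta$ of $\mrm{Frob}_K$ on $H^i_{\aet}(X_{\overline{K}},\mbb{Q}_\ell)$, the equality $q_K^{mr}=\beta^m$ would force $q_K^{mr}=|\beta|^m=q_K^{mi/2}$, i.e.\ $i=2r$, which is excluded. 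Hence $E_0$ is a fixed nonzero integer and $(V/T)^{G_L}=0$ for every prime $\ell\nmid pE_0$; in particular $(V/T)^{G_L}=0$ for almost all $\ell$.

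The structural reduction and the purity input are routine. The step requiring care is the uniformity in (2): since there $V$ and $T$ are allowed to depend on $\ell$, the bound on $v_\ell(\det(\Phi-1\mid T))$ must be extracted from data independent of those choices, which is precisely why one descends to the untwisted cohomology $H^i_{\aet}(X_{\overline{K}},\mbb{Q}_\ell)$, whose Frobenius characteristic polynomial has honest integer coefficients independent of $\ell$.
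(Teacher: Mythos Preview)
Your proof is correct and follows essentially the same route as the paper's: reduce to good reduction so that $V$ is unramified, then for (1) use that the Frobenius eigenvalues are Weil numbers of nonzero weight $i-2r$, and for (2) control everything by the $\ell$-independent characteristic polynomial of Frobenius on the full cohomology (the paper phrases this as $(T/\ell T)^{G_L}=0$ for $\ell$ prime to a fixed nonzero value of that polynomial, which is the same mechanism as your determinant bound). One cosmetic point: when $r<0$ your quantity $E_0=C(q_K^{mr})$ lies only in $\mbb{Z}[1/p]$ rather than $\mbb{Z}$, but since you are working with primes $\ell\neq p$ this is harmless.
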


\begin{proof}
This is essentially shown in Section 4 of \cite{KT}
but we give a proof here for the sake of completeness.
Replacing $K$ and $L$ by finite extensions, 
we may assume that $X$ has good reduction over $K$.
The $G_L$-action on $V$ factors through 
$G_{\mbb{F}_{q_L}}$ where $q_L$ is the order of the residue field of 
$L$.
Let 
$\mrm{Frob}_{L}\in G_{\mbb{F}_{q_L}}$ be the geometric Frobenius.
Put $P_{\ell}(T)=\mrm{det}(T-\mrm{Frob}_L\mid V)$ and 
$Q_{\ell}(T)=\mrm{det}(T-\mrm{Frob}_L\mid H^i_{\aet}(X_{\overline{K}},\mbb{Q}_{\ell}(r)))$.
Clearly $P_{\ell}(T)$ divides $Q_{\ell}(T)$.
By the Weil Conjecture, $Q_{\ell}(T)$ is independent of the choice of $\ell\not=p$,
the coefficients of $Q_{\ell}(T)$  are in $\mbb{Z}[1/q_L]$
and the roots of $P(T)$ are $q_L$-Weil numbers of weight $i-2r$.
Since $i\not= 2r$, we know $Q_{\ell}(1)\not=0$ and then $P_{\ell}(1)\not=0$. 
This in particular shows (1).
To show (2),  it suffices to show
$(T/\ell T)^{G_L}=0$ for almost all $\ell\not =p$. 
Take  any prime number $\ell \not=p$ which is prime to $P_{\ell}(1)$.
For any root $\alpha$ of $P_{\ell}(T)$, we know that $\alpha-1$ is a rational number 
and is an $\ell$-adic unit.
It follows from this fact that 
the action of $\mrm{Frob}_{L}$ on $T/\ell T$
does not have eigenvalue one, which implies $(T/\ell T)^{G_L}=0$. 
\end{proof}

Therefore, we obtained the proof of Theorem  \ref{MC}.

\begin{remark}
\label{normcondi}
(This is pointed out by Yuichiro Taguchi.) 
We can construct an example which gives a negative answer 
to the question given in the Introduction for good reduction case. 
Let $E$ be an elliptic curve over $\mbb{Q}$ with complex multiplication 
by the full ring of integers $\cO_F$ of an imaginary quadratic field $F$.
Let $\psi=\psi_{E/F}$ be the Gr\"ossencharacter
associated with $E$.
Let $p$ be a prime number such that $E$ has good ordinary reduction
and $\mfrak{p}$ a prime ideal of $\cO_F$ above $p$.
If we set $\pi:=\psi(\frak{p})$, then 
$\pi$  is a generator of $\mfrak{p}$  and we have $p=\pi \bar{\pi}$.
Here, $\bar{\pi}$ is the complex conjugation of $\pi$.
Note that $\pi$ is a $p$-Weil number of weight $1$.
Let $K=k$ be the completion of $F$ at $\mfrak{p}$. 
By definition, we have  $K=k=\mbb{Q}_p$
and $\pi$ is a uniformizer of them.
If we identify a decomposition group of $G_F$ at $\mfrak{p}$ with $G_K$,
then the action of $G_K$ on the set of $\pi$-power torsion points of $E(\overline{K})$
is given by the Lubin-Tate character $\chi_{\pi}$ associated with $\pi$.
In particular, we see that $E(Kk_{\pi})[p^{\infty}]$ is infinite.
\end{remark}

\if0
\begin{proposition}
\label{ell-adic}
Let $X$ be a proper smooth variety over  a $p$-adic field $K$
with potential good reduction.
For any prime number $\ell\not=p$ and any integer $i\not=0$,
let  $V$ be a $G_K$-stable subquotient of either
$H^i_{\aet}(X_{\overline{K}},\mbb{Q}_{\ell})$
 or $H^i_{\aet}(X_{\overline{K}},\mbb{Q}_{\ell})^{\vee}$.
Let $L/\mbb{Q}_p$ be an algebraic extension with finite residue field.

\noindent
{\rm (1)} We have $V^{G_L}=0$ for any $\ell \not=p$.

\noindent
{\rm (2)} Let $T$ be a  $G_K$-stable $\mbb{Z}_{\ell}$-lattice of $V$.
Then we have $(V/T)^{G_L}=0$ for almost all $\ell$. 
\end{proposition}

\begin{proof}
This is essentially shown in Section 4 of \cite{KT}
but we give a proof here for the sake of completeness.
We consider only the case where $V$ is a 
$G_K$-stable subquotient of 
$H^i_{\aet}(X_{\overline{K}},\mbb{Q}_{\ell})$.
(A proof for the case where $V$ is a 
$G_K$-stable subquotient of 
$H^i_{\aet}(X_{\overline{K}},\mbb{Q}_p)^{\vee}$
proceeds by almost the same argument.)
We may assume $0<i\le 2\mrm{dim}(X)$.
We may reduce a proof to the case where
 $X$ has good reduction over $K$.
Then the $G_L$-action on $V$ factors through 
$G_{\mbb{F}_{q_L}}$ where $q_L$ is the order of the residue field of 
$L$.
Let 
$\mrm{Frob}_{L}\in G_{\mbb{F}_{q_L}}$ be the geometric Frobenius.
Put 
$P(T)=\mrm{det}(T-\mrm{Frob}_L\mid H^i_{\aet}(X_{\overline{K}},\mbb{Q}_{\ell}))$.
By the Weil Conjecture, $P(T)$ is independent of the choice of $\ell\not=p$,
the coefficients of $P(T)$  are integers 
and the roots of $P(T)$ are $q_L$-Weil numbers of weight $i$.
Put $P_{\ell}(T)=\mrm{det}(T-\mrm{Frob}_L\mid V)$.
Clearly $P_{\ell}(T)$ divides $P(T)$.
Since $i\not= 0$, we know $P(1)\not=0$ and then $P_{\ell}(1)\not=0$. 
This in particular shows (1).
To show (2),  it suffices to show
$(T/\ell T)^{G_L}=0$ for almost all $\ell\not =p$. 
Take  any prime number $\ell$ which is prime to $P(1)$.
For any root $\alpha$ of $P(T)$, we know that $\alpha-1$ is prime to $\ell$.
It follows from this fact that 
the action of $\mrm{Frob}_{L}$ on $T/\ell T$
does not have eigenvalue one, which implies $(T/\ell T)^{G_L}=0$. 
\end{proof}
\fi

\end{document}